\title[Axiomatizations of Lov\'asz extensions]{Axiomatizations of Lov\'asz extensions of pseudo-Boolean functions}
\author{Miguel Couceiro}
\address{Mathematics Research Unit, FSTC, University of Luxembourg \\
6, rue Coudenhove-Kalergi, L-1359 Luxembourg, Luxembourg}%
\email{miguel.couceiro[at]uni.lu }
\author{Jean-Luc Marichal}
\address{Mathematics Research Unit, FSTC, University of Luxembourg \\
6, rue Coudenhove-Kalergi, L-1359 Luxembourg, Luxembourg}%
\email{jean-luc.marichal[at]uni.lu }
\date{May 5, 2011}
\begin{document}

\theoremstyle{plain}
\newtheorem{theorem}{Theorem}
\newtheorem{lemma}[theorem]{Lemma}
\newtheorem{proposition}[theorem]{Proposition}
\newtheorem{corollary}[theorem]{Corollary}
\newtheorem{fact}[theorem]{Fact}
\newtheorem*{main}{Main Theorem}

\theoremstyle{definition}
\newtheorem{definition}[theorem]{Definition}
\newtheorem{example}[theorem]{Example}

\theoremstyle{remark}
\newtheorem*{conjecture}{Conjecture}
\newtheorem{remark}{Remark}
\newtheorem{claim}{Claim}

\newcommand{\N}{\mathbb{N}}
\newcommand{\R}{\mathbb{R}}
\newcommand{\Vspace}{\vspace{2ex}}
\newcommand{\bfx}{\mathbf{x}}

\begin{abstract}
Three important properties in aggregation theory are investigated, namely horizontal min-additivity, horizontal max-additivity, and comonotonic
additivity, which are defined by certain relaxations of the Cauchy functional equation in several variables. We show that these properties are
equivalent and we completely describe the functions characterized by them. By adding some regularity conditions, these functions coincide with
the Lov\'asz extensions vanishing at the origin, which subsume the discrete Choquet integrals. We also propose a simultaneous generalization of
horizontal min-additivity and horizontal max-additivity, called horizontal median-additivity, and we describe the corresponding function class.
Additional conditions then reduce this class to that of symmetric Lov\'asz extensions, which includes the discrete symmetric Choquet integrals.
\end{abstract}

\keywords{Aggregation function, discrete Choquet integral, discrete symmetric Choquet integral, Lov\'asz extension, functional equation, Cauchy
equation, comonotonic additivity, horizontal additivity, axiomatization}

\subjclass[2010]{Primary 39B22, 39B72; Secondary 26B35}

\maketitle

\section{Introduction}

When we need to merge a set of numerical values into a single one, we usually make use of a so-called aggregation function, e.g., a mean or an
averaging function.
Various aggregation functions have been proposed in the literature, thus giving rise to the growing theory of aggregation which proposes,
analyzes, and characterizes aggregation function classes. For recent references, see Beliakov et al.~\cite{BelPraCal07} and Grabisch et
al.~\cite{GraMarMesPap09}.

A noteworthy aggregation function is the so-called discrete Choquet integral, which has been widely investigated in aggregation theory, due to
its many applications for instance in decision making (see the edited book \cite{GraMurSug00}). A convenient way to introduce the discrete
Choquet integral is via the concept of Lov\'asz extension. An $n$-place Lov\'asz extension is a continuous function $f\colon\R^n\to\R$ whose
restriction to each of the $n!$ subdomains
$$
\R^n_{\sigma}=\{\bfx=(x_1,\ldots,x_n)\in\R^n : x_{\sigma(1)}\leqslant\cdots\leqslant x_{\sigma(n)}\}\qquad (\sigma\in S_n)
$$
is an affine function, where $S_n$ denotes the set of permutations on $[n]=\{1,\ldots,n\}$. An $n$-place Choquet integral is simply a
nondecreasing (in each variable) $n$-place Lov\'asz extension which vanishes at the origin. For general background, see
\cite[{\S}5.4]{GraMarMesPap09}.

The class of $n$-place Choquet integrals has been axiomatized independently by means of two noteworthy aggregation properties, namely
comonotonic additivity (see, e.g., \cite{deCBol92}) and horizontal min-additivity (originally called ``horizontal additivity'', see
\cite[{\S}2.5]{BenMesViv02}). Recall that a function $f\colon\R^{n}\to\R$ is said to be comonotonically additive if, for every $\sigma\in S_n$,
we have
$$
f(\bfx+\bfx') = f(\bfx)+ f(\bfx')\qquad (\bfx,\bfx'\in\R^n_{\sigma}).
$$
To describe the second property, consider the horizontal min-additive decomposition of the $n$-tuple $\bfx\in\R^n$ obtained by ``cutting'' it
with a real number $c$, namely
$$
\bfx = (\bfx\wedge c)+ (\bfx-(\bfx\wedge c)),
$$
where $\bfx\wedge c$ denotes the $n$-tuple whose $i$th component is $x_i\wedge c=\min(x_i,c)$. A function $f\colon\R^{n}\to\R$ is said to be
horizontally min-additive if
$$
f(\bfx) = f(\bfx\wedge c)+ f(\bfx-(\bfx\wedge c))\qquad (\bfx\in\R^n,\, c\in\R).
$$

In this paper we completely describe the function classes axiomatized by each of these properties. More precisely, after recalling the
definitions of Lov\'asz extensions, discrete Choquet integrals, and their symmetric versions (Section 2), we show that comonotonic additivity
and horizontal min-additivity (as well as its dual counterpart, namely horizontal max-additivity) are actually equivalent properties. We
describe the function class axiomatized by these properties and we show that, up to certain regularity conditions (based on those we usually add
to the Cauchy functional equation to get linear solutions only), these properties completely characterize those $n$-place Lov\'asz extensions
which vanish at the origin. Nondecreasing monotonicity is then added to characterize the class of $n$-place Choquet integrals (Section 3). We
also introduce a weaker variant of the properties above, called horizontal median-additivity, and determine the function class axiomatized by
this new property. Finally, by adding some natural properties, we characterize the class of $n$-place symmetric Lov\'asz extensions and the
subclass of $n$-place symmetric Choquet integrals (Section 4).

We employ the following notation throughout the paper. Let $\R_+=\left[0,+\infty\right[$ and $\R_-=\left]-\infty,0\right]$. We let $I$ denote a
nontrivial (i.e., of positive measure) real interval, possibly unbounded, containing the origin $0$. We also introduce the notation
$I_+=I\cap\R_+$, $I_-=I\cap\R_-$, and $I_{\sigma}^n=I^n\cap\R_{\sigma}^n$. For every $A\subseteq [n]$, the symbol $\mathbf{1}_A$ denotes the
$n$-tuple whose $i$th component is $1$, if $i\in A$, and $0$, otherwise. Let also $\mathbf{1}=\mathbf{1}_{[n]}$ and
$\mathbf{0}=\mathbf{1}_{\varnothing}$. The symbols $\wedge$ and $\vee$ denote the minimum and maximum functions, respectively. For every
$\bfx\in\R^n$, let $\bfx^+=\bfx\vee 0$ and $\bfx^-=(-\bfx)^+$. For every function $f\colon I^n\to\R$, we define its diagonal section
$\delta_f\colon I\to\R$ by $\delta_f(x)=f(x\mathbf{1})$. More generally, for every $A\subseteq [n]$, we define the function $\delta_f^A\colon
I\to\R$ by $\delta_f^A(x)=f(x\mathbf{1}_A)$.

In order not to restrict our framework to functions defined on $\R$, we consider functions defined on intervals $I$ containing $0$, in
particular of the forms $I_+$, $I_-$, and those centered at $0$.

It is important to notice that comonotonic additivity as well as horizontal min-additivity and horizontal max-additivity, when restricted to
functions $f\colon I^n\to\R$, extend the classical additivity property defined by the Cauchy functional equation for $n$-place functions
\begin{equation}\label{eq:sddfs78f}
f(\bfx+\bfx') = f(\bfx)+ f(\bfx')\qquad (\bfx,\bfx',\bfx+\bfx'\in I^n).
\end{equation}
In this regard, recall that the general solution $f\colon I^n\to\R$ of the Cauchy equation (\ref{eq:sddfs78f}) is given by
$f(\bfx)=\sum_{k=1}^nf_k(x_k)$, where the $f_k\colon I\to\R$ ($k\in [n]$) are arbitrary solutions of the basic Cauchy equation
$f_k(x+x')=f_k(x)+f_k(x')$ (see \cite[{\S}2--4]{AczDho89}). As the following theorem states, under some regularity conditions, each $f_k$ is
necessarily a linear function.

\begin{theorem}\label{thm:sa876as}
Let $I$ be a nontrivial real interval, possibly unbounded, containing $0$. If $f\colon I\to\R$ solves the basic Cauchy equation
$$
f(x+x')=f(x)+f(x')\qquad (x, x', x+x'\in I),
$$
then either $f$ is of the form $f(x) = cx$ for some $c\in\R$, or the graph of $f$ is everywhere dense in $I\times\R$. The latter case is
excluded as soon as $f$ is continuous at a point or monotonic or Lebesgue measurable or bounded from one side on a set of positive measure.
\end{theorem}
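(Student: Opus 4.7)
The plan is to reduce the statement to the classical dichotomy for additive functions on all of $\R$, which is already well documented in the literature on the Cauchy equation. Since $I$ is a nontrivial interval containing $0$, for every $x\in\R$ there exists $n\in\N$ with $x/n\in I$. Iterating the basic Cauchy equation on $I$ gives $f(x)=nf(x/n)$ whenever $x\in I$, which suggests defining an extension $F\colon\R\to\R$ by $F(x)=nf(x/n)$ for any such $n$. First I would verify that this definition is independent of the choice of $n$, by observing that for $m,n\in\N$ with $x/n,x/m\in I$ one also has $x/(mn)\in I$, so that both $nf(x/n)$ and $mf(x/m)$ equal $mnf(x/(mn))$ by iterated additivity. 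Next I would check that $F$ is additive on $\R$: for $x,y\in\R$, pick $n$ large enough that $x/n,y/n,(x+y)/n\in I$, and use the Cauchy equation for $f$ at the arguments $x/n,y/n$ to conclude $F(x+y)=F(x)+F(y)$. By construction $F|_I=f$.

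Next I would invoke the classical dichotomy for additive functions $F\colon\R\to\R$ (see, e.g., \cite{AczDho89}): $\mathbb{Q}$-homogeneity $F(qx)=qF(x)$ for all $q\in\mathbb{Q}$ is automatic from additivity, and consequently either $F(x)=cx$ on $\R$ with $c=F(1)$, or the graph of $F$ is dense in $\R^2$. In the first case, restricting to $I$ yields $f(x)=cx$. In the second case, for any nonempty open rectangle $U\times V\subseteq I\times\R$, density of the graph of $F$ in $\R^2$ produces a point of the graph inside $U\times V$; since that point lies in $I\times\R$, it also belongs to the graph of $f=F|_I$, so the graph of $f$ is dense in $I\times\R$.

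Finally, the fact that each of the listed regularity conditions (continuity at a point, monotonicity, Lebesgue measurability, one-sided boundedness on a set of positive measure) forces the dense-graph alternative to fail is a well-known series of classical results (Cauchy, Darboux, Fr\'echet, Sierpi\'nski, Ostrowski, Banach); I would simply cite \cite[{\S}2--4]{AczDho89}. The main subtlety is not technical but bookkeeping: one must check that the local regularity of $f$ on $I$ transfers to $F$ on $\R$. This is immediate in each case (for instance, continuity of $f$ at a point $x_0\in I$ yields continuity of $F$ at $x_0$, which by additivity forces continuity of $F$ everywhere), so no real obstacle arises. The only place one needs a bit of care is the measurability case, where one should note that $F$ is Lebesgue measurable on $\R$ because $\R$ is covered by countably many translates of the form $n\cdot I$ on each of which $F$ is a rescaling of $f$.
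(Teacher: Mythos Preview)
Your overall strategy---extend $f$ to an additive $F\colon\R\to\R$, invoke the classical dichotomy on $\R$, then restrict back to $I$---is precisely the paper's route. The paper outsources the extension step to a known result (the extension theorem in Kuczma's book, applied after setting $J=\{x/2:x\in I\}$ so that $J+J=I$), whereas you build $F$ explicitly via $F(x)=nf(x/n)$; the density transfer and the appeal to the classical regularity results are handled the same way in both.

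There is one genuine gap. Your opening claim ``for every $x\in\R$ there exists $n\in\N$ with $x/n\in I$'' fails when $I$ is one-sided, e.g.\ $I=[0,1]$: no negative $x$ can be scaled into $I$ by a positive integer, so your $F$ is only defined on a half-line and the additivity check ``pick $n$ large enough that $x/n,y/n,(x+y)/n\in I$'' cannot be carried out for $x,y$ of opposite signs. The paper explicitly allows such $I$ (and the later applications use $I_+$ and $I_-$), so this case must be covered. The repair is routine---first build $F$ on the half-line spanned by $I$ with your recipe, then extend to the other half-line by oddness, $F(-x):=-F(x)$; additivity on all of $\R$ follows immediately. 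The same oversight recurs in your measurability transfer, where $\bigcup_n n\cdot I$ is only a half-line when $I$ is one-sided; again oddness of $F$ handles it. Once this is patched, your argument and the paper's coincide.
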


\begin{proof}
See Appendix~\ref{app}.
\end{proof}

\noindent (We would like to acknowledge Professor Maksa at the Institute of Mathematics of the University of Debrecen, Hungary, for providing the proof of Theorem~\ref{thm:sa876as}.)

As we will see in this paper, comonotonic additivity, horizontal min-additivity, and horizontal median-additivity of a function $f\colon
I^n\to\R$ force the $1$-place functions $\delta_f^A|_{I_+}$, and $\delta_f^A|_{I_-}$ $(A\subseteq [n])$ to solve the basic Cauchy equation.
Theorem~\ref{thm:sa876as} will hence be useful to describe the corresponding function classes whenever the regularity conditions stated are
assumed.

Recall that a function $f\colon I^n\to\R$ is said to be \emph{homogeneous} (resp.\ \emph{positively homogeneous}) \emph{of degree one} if
$f(c\,\bfx)=c\,f(\bfx)$ for every $\bfx\in I^n$ and every $c\in\R$ (resp.\ every $c>0$) such that $c\,\bfx\in I^n$.

\section{Lov\'asz extensions and symmetric Lov\'asz extensions}

We now recall the concept of Lov\'asz extension and introduce that of symmetric Lov\'asz extension.

Consider a \emph{pseudo-Boolean function}, that is, a function $\phi\colon\{0,1\}^n\to\R$. The \emph{Lov\'asz extension} of $\phi$ is the
function $f_{\phi}\colon\R^n\to\R$ whose restriction to each subdomain $\R^n_{\sigma}$ $(\sigma\in S_n)$ is the unique affine function which
agrees with $\phi$ at the $n+1$ vertices of the $n$-simplex $[0,1]^n\cap\R^n_{\sigma}$ (see \cite{Lov83,Sin84}). We then have
$f_{\phi}|_{\{0,1\}^n}=\phi$.

We say that a function $f\colon\R^n\to\R$ is a \emph{Lov\'asz extension} if there is a function $\phi\colon\{0,1\}^n\to\R$ such that
$f=f_{\phi}$. For any Lov\'asz extension $f\colon\R^n\to\R$, the function $f_0=f-f(\mathbf{0})$ has the representation
\begin{equation}\label{eq:F}
f_0(\bfx) = x_{\sigma(1)}\,\delta_{f_0}(1)+\sum_{i=2}^{n} (x_{\sigma(i)}-x_{\sigma(i-1)})\,\delta_{f_0}^{A_{\sigma}^{\uparrow}(i)}(1)\qquad
(\bfx\in\R^n_{\sigma}),
\end{equation}
where $A_{\sigma}^{\uparrow}(i)=\{\sigma(i),\ldots,\sigma(n)\}$, with $A_{\sigma}^{\uparrow}(n+1)=\varnothing$. Indeed, both sides of (\ref{eq:F}) agree at $\bfx=\mathbf{0}$ and
$\bfx=\mathbf{1}_{A_{\sigma}^{\uparrow}(k)}$ for every $k\in [n]$. Thus we see that $f_0$ is positively homogeneous of degree one.

An $n$-place \emph{Choquet integral} is a nondecreasing Lov\'asz extension $f\colon\R^n\to\R$ such that $f(\mathbf{0})=0$. It is easy to see
that a Lov\'asz extension $f\colon\R^n\to\R$ is a Choquet integral if and only if its underlying pseudo-Boolean function $\phi=f|_{\{0,1\}^n}$
is nondecreasing and vanishes at the origin (see \cite[{\S}5.4]{GraMarMesPap09}).

We now introduce the concept of symmetric Lov\'asz extension. Here ``symmetric'' does not refer to invariance under a permutation of variables
but rather to the role of the origin of $\R^n$ as a symmetry center with respect to the function values whenever the function vanishes at the
origin.

The \emph{symmetric Lov\'asz extension} of a pseudo-Boolean function $\phi\colon\{0,1\}^n\to\R$ is the function
$\check{f}_{\phi}\colon\R^n\to\R$ defined by
\begin{equation}\label{eq:sadfdfsa76}
\check{f}_{\phi}(\bfx)=f_{\phi}(\mathbf{0})+f_{\phi}(\bfx^+)-f_{\phi}(\bfx^-)\qquad (\bfx\in\R^n),
\end{equation}
where $f_{\phi}$ is the Lov\'asz extension of $\phi$.

We say that a function $f\colon\R^n\to\R$ is a \emph{symmetric Lov\'asz extension} if there is a function $\phi\colon\{0,1\}^n\to\R$ such that
$f=\check{f}_{\phi}$. For any symmetric Lov\'asz extension $f\colon\R^n\to\R$, by (\ref{eq:F}) and (\ref{eq:sadfdfsa76}) the function
$f_0=f-f(\mathbf{0})$ has the representation
\begin{eqnarray}
f_0(\bfx) &=& x_{\sigma(p+1)}\,\delta_{f_0}^{A_{\sigma}^{\uparrow}(p+1)}(1)+\sum_{i=p+2}^n(x_{\sigma(i)}-x_{\sigma(i-1)})\,\delta_{f_0}^{A_{\sigma}^{\uparrow}(i)}(1)\nonumber\\
&& \null
+x_{\sigma(p)}\,\delta_{f_0}^{A_{\sigma}^{\downarrow}(p)}(1)+\sum_{i=1}^{p-1}(x_{\sigma(i)}-x_{\sigma(i+1)})\,\delta_{f_0}^{A_{\sigma}^{\downarrow}(i)}(1)\qquad
(\bfx\in\R^n_{\sigma}),\label{eq:G}
\end{eqnarray}
where $A_{\sigma}^{\downarrow}(i)=\{\sigma(1),\ldots,\sigma(i)\}$, with $A_{\sigma}^{\downarrow}(0)=\varnothing$, and the integer $p\in\{0,1,\ldots,n\}$ is such that $x_{\sigma(p)}<0\leqslant
x_{\sigma(p+1)}$. Thus we see that $f_0$ is homogeneous of degree one.

Nondecreasing symmetric Lov\'asz extensions vanishing at the origin, also called \emph{discrete symmetric Choquet integrals}, were introduced by
\v{S}ipo\v{s} \cite{Sip79} (see also \cite[{\S}5.4]{GraMarMesPap09}). We observe that a symmetric Lov\'asz extension $f\colon\R^n\to\R$ is a
symmetric Choquet integral if and only if its underlying pseudo-Boolean function $\phi=f|_{\{0,1\}^n}$ is nondecreasing and vanishes at the
origin.

\section{Axiomatizations of Lov\'asz extensions}

In the present section we show that, for a class of intervals $I$, comonotonic additivity is equivalent to horizontal min-additivity (resp.\
horizontal max-additivity) and we describe the corresponding function class. By adding certain regularity conditions, we then axiomatize the
class of $n$-place Lov\'asz extensions. We first recall these properties.

Let $I$ be a nontrivial real interval, possibly unbounded, containing $0$. Two $n$-tuples $\bfx,\bfx'\in I^n$ are said to be \emph{comonotonic}
if there exists $\sigma\in S_n$ such that $\bfx,\bfx'\in I^n_{\sigma}$. A function $f\colon I^n\to\R$ is said to be \emph{comonotonically
additive} if, for every comonotonic $n$-tuples $\bfx,\bfx'\in I^n$ such that $\bfx+\bfx'\in I^n$, we have
\begin{equation}\label{eq:7sd6f8}
f(\bfx+\bfx')=f(\bfx)+f(\bfx').
\end{equation}
Given $\bfx\in I^n$ and $c\in I$, let $\llbracket\bfx\rrbracket_c=\bfx-\bfx\wedge c$ and $\llbracket\bfx\rrbracket^c=\bfx-\bfx\vee c$. We say
that a function $f\colon I^n\to\R$ is
\begin{itemize}
\item \emph{horizontally min-additive} if, for every $\bfx\in I^n$ and every $c\in I$ such that $\llbracket\bfx\rrbracket_c\in I^n$, we have
\begin{equation}\label{eq:s76f8}
f(\bfx)=f(\bfx\wedge c)+f(\llbracket\bfx\rrbracket_c).
\end{equation}

\item \emph{horizontally max-additive} if, for every $\bfx\in I^n$ and every $c\in I$ such that $\llbracket\bfx\rrbracket^c\in I^n$, we have
\begin{equation}\label{eq:xc987x}
f(\bfx)=f(\bfx\vee c)+f(\llbracket\bfx\rrbracket^c).
\end{equation}
\end{itemize}

We immediately observe that, since any $\bfx\in I^n$ decomposes into the sum of the comonotonic $n$-tuples $\bfx\wedge c$ and
$\llbracket\bfx\rrbracket_c$ for every $c$ (i.e., $\bfx=(\bfx\wedge c)+\llbracket\bfx\rrbracket_c$), any comonotonically additive function is
necessarily horizontally min-additive. Dually, any comonotonically additive function is horizontally max-additive.

We also observe that if $f\colon I^n\to\R$ satisfies any of these properties, then necessarily $f(\mathbf{0})=0$ (just take
$\bfx=\bfx'=\mathbf{0}$ and $c=0$ in (\ref{eq:7sd6f8})--(\ref{eq:xc987x})).

\begin{lemma}\label{lemma:sdaf786}
If $f\colon I^n\to\R$ is horizontally min-additive (resp.\ horizontally max-additive) then $\delta_f^A|_{I_+}$ (resp.\ $\delta_f^A|_{I_-}$) is
additive for every $A\subseteq [n]$. Moreover, if $I$ is centered at $0$, then $\delta_f$ is additive and odd.
\end{lemma}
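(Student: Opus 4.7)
My plan is to apply horizontal min-additivity (\ref{eq:s76f8}) to the diagonal vector $\bfx=(x+x')\mathbf{1}_A$ with cut level $c=x'$, and to verify that both pieces $\bfx\wedge c$ and $\llbracket\bfx\rrbracket_c$ stay proportional to $\mathbf{1}_A$. Under that choice the relation (\ref{eq:s76f8}) collapses, by the definition of $\delta_f^A$, to the basic Cauchy equation $\delta_f^A(x+x')=\delta_f^A(x)+\delta_f^A(x')$. The sign restriction $x,x'\in I_+$ arises precisely from the coordinate-wise computation: one needs $x\geqslant 0$ to obtain $\min(x+x',x')=x'$ on the indices $i\in A$, and $x'\geqslant 0$ to obtain $\min(0,x')=0$ on the indices $i\notin A$. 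For the dual horizontal max case I will run the same template with $c=x'$ and $x,x'\in I_-$, so that $\bfx\vee c=x'\mathbf{1}_A$ and $\llbracket\bfx\rrbracket^c=x\mathbf{1}_A$, whence $\delta_f^A|_{I_-}$ is additive by (\ref{eq:xc987x}).

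For the moreover part, stated here for the min case (the max case being symmetric), I assume $I=-I$ and specialize the above computation to $A=[n]$. The key observation is that the sign condition on $x'$ becomes vacuous, since no indices lie outside $A$; hence the identity $\delta_f(x+x')=\delta_f(x)+\delta_f(x')$ persists whenever $x\in I_+$ and $x',x+x'\in I$, with no sign constraint on $x'$. Taking $x'=-x\in I$ (available by centeredness) and using $\delta_f(0)=f(\mathbf{0})=0$ gives $\delta_f(-x)=-\delta_f(x)$, so $\delta_f$ is odd. The remaining case $a,b\leqslant 0$ with $a+b\in I$ is then handled by combining oddness with the case just proved:
\[
\delta_f(a+b)=-\delta_f(-a-b)=-\delta_f(-a)-\delta_f(-b)=\delta_f(a)+\delta_f(b).
\]

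No substantive obstacle is anticipated; the only point requiring care is bookkeeping of which coordinates of $\bfx\wedge c$ (resp.\ $\bfx\vee c$) are controlled by the sign of $x$ and which by the sign of $x'$, since it is exactly this distinction that determines when the sign hypothesis on $x'$ may be dropped in passing from the first assertion to the statement about $\delta_f$.
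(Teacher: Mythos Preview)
Your proof is correct and follows essentially the same approach as the paper: both apply horizontal min-additivity to the diagonal vector $(x+x')\mathbf{1}_A$ with a cut level equal to one of the summands, and for the ``moreover'' part both specialize to $A=[n]$, derive oddness from $\delta_f(0)=0$, and then reduce the remaining sign case to the one already handled. The only cosmetic differences are that the paper cuts at $c=x$ rather than $c=x'$, and treats the case $x,x'<0$ by a fresh cut at $c=x+x'$ instead of your (slightly slicker) appeal to oddness applied to $-x,-x'\in I_+$.
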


\begin{proof}
We prove the result when $f$ is horizontally min-additive; the other claim can be dealt with dually. If $x,x'\in I_+$ is such that $x+x'\in
I_+$, then $x\leqslant x+x'$ and, using horizontal min-additivity with $c=x$, we get $\delta_f^A(x+x')=\delta_f^A(x)+\delta_f^A(x')$, which
shows that $\delta_f^A|_{I_+}$ is additive.

Assume now that $I$ is centered at $0$. If $x<0$ and $x'\geqslant 0$ are such that $x,x'\in I$, then $x\leqslant x+x'<x'$ and, using
horizontal min-additivity with $c=x$, we get $\delta_f(x+x')=\delta_f(x)+\delta_f(x')$. In particular, taking $x'=-x$, we obtain
$0=\delta_f(0)=\delta_f(x-x)=\delta_f(x)+\delta_f(-x)$ and hence $\delta_f(-x)=-\delta_f(x)$.

If $x<0$ and $x'<0$ are such that $x,x'\in I$, then, using horizontal min-additivity with $c=x+x'$, we get $\delta_f(x) =
\delta_f(x+x')+\delta_f(-x')=\delta_f(x+x')-\delta_f(x')$. Thus $\delta_f$ is additive and odd.
\end{proof}

\begin{remark}
For a horizontally min-additive or horizontally max-additive function $f\colon I^n\to\R$, the function $\delta_f^A$ need not be additive. For
instance, consider the horizontally min-additive function $f\colon\R^2\to\R$ defined by $f(x_1,x_2)=x_1\wedge x_2$. For $x>0$ and $x'=-x$, we
have
$$
\delta_f^{\{1\}}(x-x)=0>-x=\delta_f^{\{1\}}(x)+\delta_f^{\{1\}}(-x).
$$
\end{remark}

\begin{theorem}\label{thm:ads76fa}
Assume $0\in I\subseteq\R_+$ or $I=\R$. A function $f\colon I^n\to\R$ is horizontally min-additive if and only if there exists $g\colon
I^n\to\R$, with $\delta_g$ and $\delta_g^A|_{I_+}$ additive for every $A\subseteq [n]$, such that, for every $\sigma\in S_n$,
\begin{equation}\label{eq:sda78f9}
f(\bfx)=\delta_g(x_{\sigma(1)})+\sum_{i=2}^n \delta_g^{A_{\sigma}^{\uparrow}(i)}(x_{\sigma(i)}-x_{\sigma(i-1)})\qquad (\bfx\in I^n_{\sigma}).
\end{equation}
In this case, we can choose $g=f$.
\end{theorem}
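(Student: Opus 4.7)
The plan is to prove both directions and to show that $g=f$ always serves as a valid witness. For necessity, Lemma~\ref{lemma:sdaf786} immediately gives additivity of $\delta_f^A|_{I_+}$ for every $A\subseteq[n]$; when $I=\R$ the same lemma also yields additivity of $\delta_f$ on $\R$, and when $I\subseteq\R_+$ the condition on $\delta_f$ is the special case $A=[n]$. To derive~(\ref{eq:sda78f9}) with $g=f$, I would iterate the horizontal cut. For $\bfx\in I_\sigma^n$, cutting at $c=x_{\sigma(1)}\in I$ gives $\bfx\wedge c=x_{\sigma(1)}\mathbf{1}$ and a remainder $\bfx-x_{\sigma(1)}\mathbf{1}\in I_\sigma^n$ whose $\sigma(1)$-th coordinate is zero, so horizontal min-additivity peels off the term $\delta_f(x_{\sigma(1)})$. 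Cutting the remainder at $c=x_{\sigma(2)}-x_{\sigma(1)}$ peels off $\delta_f^{A_\sigma^\uparrow(2)}(x_{\sigma(2)}-x_{\sigma(1)})$, and an induction on $k$ produces the summand $\delta_f^{A_\sigma^\uparrow(k)}(x_{\sigma(k)}-x_{\sigma(k-1)})$ at the $k$-th step. After $n$ steps the remainder is $\mathbf{0}$, and the identity $f(\mathbf{0})=0$ closes the telescoping into~(\ref{eq:sda78f9}).

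For the converse, I would start from (\ref{eq:sda78f9}), use the fact that additivity forces $\delta_g(0)=\delta_g^A(0)=0$, and verify horizontal min-additivity by direct computation. Given $\bfx\in I_\sigma^n$ and $c\in I$ with $\llbracket\bfx\rrbracket_c\in I^n$, both $\bfx\wedge c$ and $\llbracket\bfx\rrbracket_c$ still lie in $I_\sigma^n$. Set $k=\max\{i:x_{\sigma(i)}\le c\}$, with the conventions $k=0$ when $x_{\sigma(1)}>c$ and $k=n$ when $x_{\sigma(n)}\le c$. Expanding~(\ref{eq:sda78f9}) on the two pieces, all the $\delta_g^A(0)$-contributions vanish and the remaining terms align with those of $f(\bfx)$ except for a single pair that must be recombined: either the leading pair $\delta_g(c)+\delta_g(x_{\sigma(1)}-c)=\delta_g(x_{\sigma(1)})$ when $k=0$, which uses additivity of $\delta_g$ on $I$, or the intermediate pair
$$\delta_g^{A_\sigma^\uparrow(k+1)}(c-x_{\sigma(k)})+\delta_g^{A_\sigma^\uparrow(k+1)}(x_{\sigma(k+1)}-c)=\delta_g^{A_\sigma^\uparrow(k+1)}(x_{\sigma(k+1)}-x_{\sigma(k)})$$
when $1\le k\le n-1$, which uses additivity of $\delta_g^{A_\sigma^\uparrow(k+1)}|_{I_+}$ since both summands are nonnegative and lie in $I_+$.

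The argument is essentially a clean telescoping in both directions; the main obstacle is the bookkeeping, namely verifying that each cut value and every intermediate vector remains in the prescribed domain. This is what the hypothesis $I\subseteq\R_+$ or $I=\R$ is designed to ensure (for $I\subseteq\R_+$ the inclusions $x_{\sigma(k)}-x_{\sigma(k-1)}\in[0,x_{\sigma(k)}]\subseteq I$ and the analogous bounds for the remainders follow from the fact that $I$ is an interval containing $0$, while for $I=\R$ the inclusions are trivial). It is also the reason the hypothesis on $g$ requires full additivity of $\delta_g$ on $I$ and not merely on $I_+$: the leading-term recombination in the $k=0$ case may involve negative arguments when $I=\R$.
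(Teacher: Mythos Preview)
Your proposal is correct and follows essentially the same approach as the paper's own proof: the necessity direction is obtained by iterating the horizontal cut at the successive levels $x_{\sigma(1)},x_{\sigma(2)}-x_{\sigma(1)},\ldots$ together with Lemma~\ref{lemma:sdaf786}, and sufficiency is checked by a case analysis on the position of $c$ relative to the sorted coordinates, recombining either the leading $\delta_g$-pair (using additivity of $\delta_g$ on $I$) or an intermediate $\delta_g^{A_\sigma^\uparrow(k+1)}$-pair (using additivity on $I_+$). Your additional remarks on domain membership and on why the hypothesis $I\subseteq\R_+$ or $I=\R$ is needed are accurate and make explicit what the paper leaves implicit.
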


\begin{proof}
(Necessity) Let $\sigma\in S_n$ and let $\bfx\in I^n_{\sigma}$. By repeatedly applying horizontal min-additivity with the successive cut levels
$$
x_{\sigma(1)},x_{\sigma(2)}-x_{\sigma(1)},\ldots,x_{\sigma(n-1)}-x_{\sigma(n-2)},
$$
we obtain
\begin{eqnarray*}
f(\bfx) &=& \delta_f(x_{\sigma(1)})+f(0,x_{\sigma(2)}-x_{\sigma(1)},\ldots,x_{\sigma(n)}-x_{\sigma(1)})\\
&=& \delta_f(x_{\sigma(1)})+\delta_f^{A_{\sigma}^{\uparrow}(2)}(x_{\sigma(2)}-x_{\sigma(1)})+ f(0,0,x_{\sigma(3)}-x_{\sigma(2)},\ldots,x_{\sigma(n)}-x_{\sigma(2)})\\
&=& \cdots\\
&=& \delta_f(x_{\sigma(1)})+\sum_{i=2}^n \delta_f^{A_{\sigma}^{\uparrow}(i)}(x_{\sigma(i)}-x_{\sigma(i-1)}).
\end{eqnarray*}
Thus (\ref{eq:sda78f9}) holds with $g=f$. Moreover, by Lemma~\ref{lemma:sdaf786}, $\delta_f$ and $\delta_f^A|_{I_+}$ are additive for every
$A\subseteq [n]$.

(Sufficiency) Let $\bfx\in I^n$ and $c\in I$ such that $\llbracket\bfx\rrbracket_c\in I^n$. There is $\sigma\in S_n$ such that $\bfx\in
I^n_{\sigma}$ and hence $f(\bfx)$ is given by (\ref{eq:sda78f9}), where $\delta_g$ and $\delta_g^A|_{I_+}$ are additive for every $A\subseteq
[n]$, which implies $g(\mathbf{0})=0$.

Suppose first that $c\leqslant x_{\sigma(1)}$. Then we have $f(\bfx\wedge c)=\delta_g(c)$ and
$$
f(\llbracket\bfx\rrbracket_c)= \delta_g(x_{\sigma(1)}-c)+\sum_{i=2}^n \delta_g^{A_{\sigma}^{\uparrow}(i)}(x_{\sigma(i)}-x_{\sigma(i-1)}).
$$
Since $\delta_g$ is additive, we finally obtain $f(\bfx\wedge c)+f(\llbracket\bfx\rrbracket_c)=f(\bfx)$.

Now, suppose that there is $p\in [n]$ such that $x_{\sigma(p)}<c\leqslant x_{\sigma(p+1)}$, where $x_{\sigma(n+1)}=\infty$. Then
$$
f(\bfx\wedge c)=\delta_g(x_{\sigma(1)})+\sum_{i=2}^p
\delta_g^{A_{\sigma}^{\uparrow}(i)}(x_{\sigma(i)}-x_{\sigma(i-1)})+\delta_g^{A_{\sigma}^{\uparrow}(p+1)}(c-x_{\sigma(p)})
$$
and
$$
f(\llbracket\bfx\rrbracket_c)=\delta_g^{A_{\sigma}^{\uparrow}(p+1)}(x_{\sigma(p+1)}-c)+\sum_{i=p+2}^n
\delta_g^{A_{\sigma}^{\uparrow}(i)}(x_{\sigma(i)}-x_{\sigma(i-1)}).
$$
Since $\delta_g^A|_{I_+}$ is additive for every $A\subseteq [n]$, we finally obtain $f(\bfx\wedge c)+f(\llbracket\bfx\rrbracket_c)=f(\bfx)$.
\end{proof}

Similarly, we obtain the following dual characterization.

\begin{theorem}\label{thm:ads76fab}
Assume $0\in I\subseteq\R_-$ or $I=\R$. A function $f\colon I^n\to\R$ is horizontally max-additive if and only if there exists $h\colon
I^n\to\R$, with $\delta_h$ and $\delta_h^A|_{I_-}$ additive for every $A\subseteq [n]$, such that, for every $\sigma\in S_n$,
$$
f(\bfx)=\delta_h(x_{\sigma(n)})+\sum_{i=1}^{n-1} \delta_h^{A_{\sigma}^{\downarrow}(i)}(x_{\sigma(i)}-x_{\sigma(i+1)})\qquad (\bfx\in
I^n_{\sigma}).
$$
In this case, we can choose $h=f$.
\end{theorem}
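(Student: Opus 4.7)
My plan is to mirror the proof of Theorem~\ref{thm:ads76fa} exactly, interchanging min with max and the up-sets $A_{\sigma}^{\uparrow}$ with the down-sets $A_{\sigma}^{\downarrow}$. The hypothesis $I\subseteq\R_-$ or $I=\R$ plays the role dual to the hypothesis in Theorem~\ref{thm:ads76fa}: it ensures that when we peel coordinates from the top, the required cut levels and residual vectors remain in $I$ and $I^n$, respectively. Additivity of $\delta_f$ and of $\delta_f^A|_{I_-}$, which will be needed throughout, is supplied by Lemma~\ref{lemma:sdaf786}.

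For necessity, I would fix $\sigma\in S_n$ and $\bfx\in I^n_{\sigma}$, and apply horizontal max-additivity iteratively with the successive cut levels
$$
x_{\sigma(n)},\ x_{\sigma(n-1)}-x_{\sigma(n)},\ \ldots,\ x_{\sigma(2)}-x_{\sigma(3)}.
$$
Each of these lies in $I$ under the hypothesis on $I$ (the first outright; the rest are nonpositive differences of ordered elements of $I_-$), and the corresponding residual vectors $\llbracket\cdot\rrbracket^c$ have only zero or nonpositive components, hence stay in $I^n$. At step $k\geq 2$, the upper piece collapses to $(x_{\sigma(n-k+1)}-x_{\sigma(n-k+2)})\,\mathbf{1}_{A_{\sigma}^{\downarrow}(n-k+1)}$ and contributes $\delta_f^{A_{\sigma}^{\downarrow}(n-k+1)}(x_{\sigma(n-k+1)}-x_{\sigma(n-k+2)})$, while the first step contributes $\delta_f(x_{\sigma(n)})$; reindexing yields the stated representation with $h=f$.

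For sufficiency, I would take $f$ of the displayed form, fix $\bfx\in I^n_{\sigma}$ and $c\in I$ with $\llbracket\bfx\rrbracket^c\in I^n$, and split on the location of $c$ among $x_{\sigma(1)}\leqslant\cdots\leqslant x_{\sigma(n)}$. The boundary cases $c\geqslant x_{\sigma(n)}$ and $c\leqslant x_{\sigma(1)}$ each reduce to a single application of additivity of $\delta_h$ on $I$. In the main case $x_{\sigma(p)}\leqslant c<x_{\sigma(p+1)}$ for some $p\in[n-1]$, both $\bfx\vee c$ and $\llbracket\bfx\rrbracket^c$ respect the ordering $\sigma$, and a direct expansion shows that $f(\bfx\vee c)+f(\llbracket\bfx\rrbracket^c)$ matches $f(\bfx)$ except that its $p$-th summand has split into $\delta_h^{A_{\sigma}^{\downarrow}(p)}(c-x_{\sigma(p+1)})$ and $\delta_h^{A_{\sigma}^{\downarrow}(p)}(x_{\sigma(p)}-c)$, both with nonpositive arguments; additivity of $\delta_h^{A_{\sigma}^{\downarrow}(p)}|_{I_-}$ fuses them into $\delta_h^{A_{\sigma}^{\downarrow}(p)}(x_{\sigma(p)}-x_{\sigma(p+1)})$, recovering $f(\bfx)$.

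The only real difficulty is the index bookkeeping with $A_{\sigma}^{\downarrow}$ versus $A_{\sigma}^{\uparrow}$; a fully mechanical alternative would be to derive the statement directly from Theorem~\ref{thm:ads76fa} by passing to $\tilde f(\bfy)=f(-\bfy)$ on $(-I)^n$, under which horizontal max-additivity becomes horizontal min-additivity, the sort $\sigma$ reverses to $\tau(i)=\sigma(n+1-i)$, the identification $A_{\tau}^{\uparrow}(i)=A_{\sigma}^{\downarrow}(n+1-i)$ holds, and the formula of Theorem~\ref{thm:ads76fa} transports back via $\delta_{\tilde f}^A(x)=\delta_f^A(-x)$ to precisely the expression claimed here.
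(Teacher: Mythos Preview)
Your proposal is correct and is precisely the approach the paper intends: the paper states Theorem~\ref{thm:ads76fab} immediately after Theorem~\ref{thm:ads76fa} with only the remark ``Similarly, we obtain the following dual characterization,'' i.e., the argument is obtained by dualizing the proof of Theorem~\ref{thm:ads76fa} exactly as you outline. Your alternative via the reflection $\tilde f(\bfy)=f(-\bfy)$ on $(-I)^n$ is a clean way to make this duality mechanical and is a welcome addition (one tiny quibble: the boundary case $c\leqslant x_{\sigma(1)}$ is in fact trivial since $\bfx\vee c=\bfx$ and $\llbracket\bfx\rrbracket^{c}=\mathbf{0}$, so no additivity of $\delta_h$ is needed there).
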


\begin{theorem}\label{thm:as876}
Assume $0\in I\subseteq\R_+$ or $I=\R$ (resp.\ $0\in I\subseteq\R_-$ or $I=\R$). A function $f\colon I^n\to\R$ is comonotonically additive if
and only if it is horizontally min-additive (resp.\ horizontally max-additive). In this case, $\delta_f$ and $\delta_f^A|_{I_+}$ (resp.\
$\delta_f$ and $\delta_f^A|_{I_-}$) are additive for every $A\subseteq [n]$.
\end{theorem}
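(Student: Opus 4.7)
The plan is to observe that one direction is already essentially noted in the paper: from the decomposition $\bfx=(\bfx\wedge c)+\llbracket\bfx\rrbracket_c$ into comonotonic pieces, comonotonic additivity immediately yields (\ref{eq:s76f8}), so every comonotonically additive function is horizontally min-additive (and dually horizontally max-additive). Thus I only need to prove the converse, and I will treat the horizontally min-additive case; the max case follows by the dual argument via Theorem~\ref{thm:ads76fab}.

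Suppose $f\colon I^n\to\R$ is horizontally min-additive. By Theorem~\ref{thm:ads76fa} (applied with $g=f$), the representation (\ref{eq:sda78f9}) holds on each sector $I^n_\sigma$, and by the same theorem together with Lemma~\ref{lemma:sdaf786} the sections $\delta_f$ and $\delta_f^A|_{I_+}$ ($A\subseteq[n]$) are additive; moreover, in the case $I=\R$, Lemma~\ref{lemma:sdaf786} additionally gives that $\delta_f$ is additive (and odd) on all of $\R$. Now fix $\sigma\in S_n$ and comonotonic $\bfx,\bfx'\in I^n_\sigma$ with $\bfx+\bfx'\in I^n$. Since the coordinatewise sum of two $\sigma$-sorted tuples is itself $\sigma$-sorted, we have $\bfx+\bfx'\in I^n_\sigma$, so (\ref{eq:sda78f9}) applies simultaneously to $f(\bfx)$, $f(\bfx')$, and $f(\bfx+\bfx')$.

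The core computation is then to match $f(\bfx+\bfx')$ termwise with $f(\bfx)+f(\bfx')$. For $i\geqslant 2$, the $i$th increment factors as
\[
(x_{\sigma(i)}+x'_{\sigma(i)})-(x_{\sigma(i-1)}+x'_{\sigma(i-1)}) = (x_{\sigma(i)}-x_{\sigma(i-1)})+(x'_{\sigma(i)}-x'_{\sigma(i-1)}),
\]
and both summands lie in $I_+$ because $\bfx,\bfx'\in I^n_\sigma$; additivity of $\delta_f^{A_\sigma^{\uparrow}(i)}|_{I_+}$ therefore splits this term. For the first term $\delta_f(x_{\sigma(1)}+x'_{\sigma(1)})$, if $0\in I\subseteq\R_+$ then $x_{\sigma(1)},x'_{\sigma(1)}\in I_+=I$ and additivity of $\delta_f=\delta_f^{[n]}$ on $I_+$ suffices; if $I=\R$, one invokes full additivity of $\delta_f$ on $\R$. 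Summing the split pieces gives $f(\bfx+\bfx')=f(\bfx)+f(\bfx')$, establishing comonotonic additivity.

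The only place where anything subtle happens is that last step: when $I=\R$, the values $x_{\sigma(1)}$ and $x'_{\sigma(1)}$ can have opposite signs, so additivity of $\delta_f$ merely on $I_+$ is not enough. This is exactly the obstacle that the ``centered at $0$'' clause of Lemma~\ref{lemma:sdaf786} is designed to overcome, and this is why the hypothesis of the theorem restricts to $I\subseteq\R_+$ or $I=\R$ (rather than an arbitrary interval containing $0$). The final additivity assertions about $\delta_f$ and $\delta_f^A|_{I_+}$ are then precisely what Lemma~\ref{lemma:sdaf786} and Theorem~\ref{thm:ads76fa} already record.
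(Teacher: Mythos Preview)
Your proof is correct and follows essentially the same approach as the paper: necessity is the already-observed decomposition into comonotonic pieces, and sufficiency applies the representation of Theorem~\ref{thm:ads76fa} to $\bfx$, $\bfx'$, and $\bfx+\bfx'$ on a common sector $I^n_\sigma$, then splits each term using additivity of $\delta_f$ and $\delta_f^{A_\sigma^{\uparrow}(i)}|_{I_+}$. Your explicit discussion of why the first term requires full additivity of $\delta_f$ when $I=\R$ (and hence why the hypothesis excludes general intervals) is a helpful elaboration of a point the paper leaves implicit.
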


\begin{proof}
We already observed that the condition is necessary. Let us now prove that it is sufficient.

Let $\bfx,\bfx'\in I^n$ be two comonotonic $n$-tuples, let $\sigma\in S_n$ be such that $\bfx,\bfx'\in I^n_{\sigma}$, and suppose that $f\colon
I^n\to\R$ is horizontally min-additive; the other case can be established dually. By Lemma~\ref{lemma:sdaf786} and Theorem~\ref{thm:ads76fa},
$\delta_f$ and $\delta_f^A|_{I_+}$ are additive for every $A\subseteq [n]$ and we have
\begin{eqnarray*}
\lefteqn{f(\bfx+\bfx')}\\
&=& \delta_f(x_{\sigma(1)}+x'_{\sigma(1)})+\sum_{i=2}^n \delta_f^{A_{\sigma}^{\uparrow}(i)}((x_{\sigma(i)}+x'_{\sigma(i)})-(x_{\sigma(i-1)}+x'_{\sigma(i-1)}))\\
&=& \delta_f(x_{\sigma(1)})+\delta_f(x'_{\sigma(1)})+ \sum_{i=2}^n \delta_f^{A_{\sigma}^{\uparrow}(i)}(x_{\sigma(i)}-x_{\sigma(i-1)}) + \sum_{i=2}^n \delta_f^{A_{\sigma}^{\uparrow}(i)}(x'_{\sigma(i)}-x'_{\sigma(i-1)})\\
&=& f(\bfx)+f(\bfx'),
\end{eqnarray*}
which shows that $f$ is comonotonically additive.
\end{proof}

\begin{remark}
\begin{enumerate}
\item[(a)] Theorems~\ref{thm:ads76fa} and \ref{thm:ads76fab} provide two equivalent representations of comonotonically additive functions
$f\colon\R^n\to\R$ (see Theorem~\ref{thm:as876}). For instance, for a binary comonotonically additive function $f\colon\R^2\to\R$, we have the
representations
$$
f(x_1,x_2)=
\begin{cases}
g(x_1,x_1)+g(0,x_2-x_1), & \mbox{if $x_1\leqslant x_2$},\\
g(x_2,x_2)+g(x_1-x_2,0), & \mbox{if $x_1\geqslant x_2$},
\end{cases}
$$
and
$$
f(x_1,x_2)=
\begin{cases}
h(x_2,x_2)+h(x_1-x_2,0), & \mbox{if $x_1\leqslant x_2$},\\
h(x_1,x_1)+h(0,x_2-x_1), & \mbox{if $x_1\geqslant x_2$},
\end{cases}
$$
where $\delta_g$, $\delta_g^A|_{\R_+}$, $\delta_h$, and $\delta_h^A|_{\R_-}$ are additive for every $A\subseteq [2]$. Figure~\ref{fig:ParaRule}
illustrates both representations in the region $x_1\leqslant x_2$, which recalls the standard ``parallelogram rule'' for vector addition. Thus,
$f$ is completely determined by its values on the $x_1$-axis, the $x_2$-axis, and the line $x_2=x_1$.

\setlength{\unitlength}{.07\linewidth}
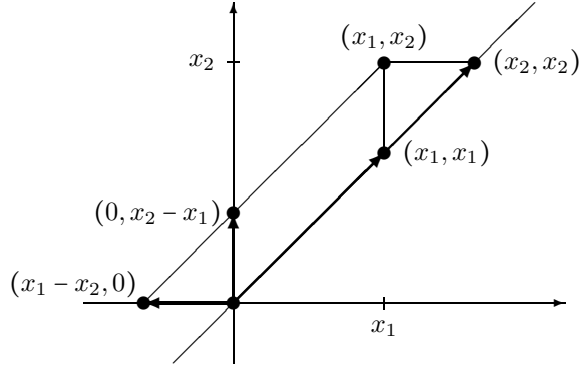
\begin{figure}[htb]
\begin{center}
\begin{picture}(8,6)
\put(2.5,0){\vector(0,1){6}}\put(0,1){\vector(1,0){8}}\put(1.5,0){\line(1,1){6}}\put(1,1){\line(1,1){4}}%
\put(5,3.5){\line(0,1){1.5}}\put(5,5){\line(1,0){1.5}}%
\put(1,1){\circle*{0.2}}\put(2.5,2.5){\circle*{0.2}}\put(5,3.5){\circle*{0.2}}\put(5,5){\circle*{0.2}}\put(6.5,5){\circle*{0.2}}%
\put(2.5,1){\circle*{0.2}}%
\put(5,0.9){\line(0,1){0.2}}\put(5,0.7){\makebox(0,0)[t]{$x_1$}}\put(2.4,5){\line(1,0){0.2}}\put(2.2,5){\makebox(0,0)[r]{$x_2$}}%
\put(5,5.2){\makebox(0,0)[b]{$(x_1,x_2)$}}\put(5.3,3.5){\makebox(0,0)[l]{$(x_1,x_1)$}}\put(6.8,5){\makebox(0,0)[l]{$(x_2,x_2)$}}%
\put(2.3,2.5){\makebox(0,0)[r]{$(0,x_2-x_1)$}}\put(0.9,1.1){\makebox(0,0)[br]{$(x_1-x_2,0)$}}%
\thicklines%
\put(2.5,1){\vector(-1,0){1.5}}\put(2.5,1){\vector(0,1){1.5}}\put(2.5,1){\vector(1,1){2.5}}\put(2.5,1){\vector(1,1){4}}
\end{picture}
\caption{Representations in the region $x_1\leqslant x_2$} \label{fig:ParaRule}
\end{center}
\end{figure}

\item[(b)] More generally, every comonotonically additive function $f\colon\R^n\to\R$ is completely determined by its values on the lines
$\{x\mathbf{1}_A : x\in\R\}$ ($A\subseteq [n]$).
\end{enumerate}
\end{remark}

We now axiomatize the class of $n$-place Lov\'asz extensions. A function $f\colon I^n\to\R$ is a Lov\'asz extension if it is the restriction to
$I^n$ of a Lov\'asz extension on $\R^n$.

\begin{theorem}\label{thm:saddsf687}
Assume $[0,1]\subseteq I\subseteq\R_+$ or $I=\R$. Let $f\colon I^n\to\R$ be a function and let $f_0=f-f(\mathbf{0})$. Then $f$ is a Lov\'asz
extension if and only if the following conditions hold:
\begin{enumerate}
\item[$(i)$] $f_0$ is comonotonically additive or horizontally min-additive (or horizontally max-additive if $I=\R$).

\item[$(ii)$] Each of the maps $\delta_{f_0}$ and $\delta_{f_0}^A|_{I_+}$ $(A\subseteq [n])$ is continuous at a point or monotonic or Lebesgue
measurable or bounded from one side on a set of positive measure.
\end{enumerate}
If $I=\R$, then the set $I_+$ can be replaced by $I_-$ in $(ii)$. Finally, Condition $(ii)$ holds whenever Condition $(i)$ holds and
$\delta_{f_0}^A$ is positively homogeneous of degree one for every $A\subseteq [n]$.
\end{theorem}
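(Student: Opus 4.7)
The plan is to obtain necessity by direct inspection of the canonical representation (\ref{eq:F}) of a Lov\'asz extension, and to obtain sufficiency by first using Theorem~\ref{thm:as876} to reduce all three variants of Condition~$(i)$ to horizontal min-additivity, then applying Theorem~\ref{thm:ads76fa} to get an affine-on-each-$I^n_\sigma$ representation in terms of the diagonal sections, upgrading those sections to linear functions via the regularity Theorem~\ref{thm:sa876as}, and finally matching the resulting formula against (\ref{eq:F}) to identify the underlying pseudo-Boolean function. For necessity, if $f=f_\phi$ then $f_0=f-f(\mathbf{0})$ is piecewise-affine and positively homogeneous of degree one by (\ref{eq:F}); in particular $\delta_{f_0}^A(x)=x\,\phi(\mathbf{1}_A)$ for $x\in\R_+$, so each $\delta_{f_0}^A|_{I_+}$ is linear and $(ii)$ is automatic, while comonotonic additivity of $f_0$ (and hence horizontal min- and max-additivity by Theorem~\ref{thm:as876}) is immediate from the linear-on-each-cone structure.

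For sufficiency, by Theorem~\ref{thm:as876} each of the three options in Condition~$(i)$ implies horizontal min-additivity of $f_0$ under the standing hypothesis on $I$, so Theorem~\ref{thm:ads76fa} gives, for every $\sigma\in S_n$,
\begin{equation*}
f_0(\bfx)=\delta_{f_0}(x_{\sigma(1)})+\sum_{i=2}^n \delta_{f_0}^{A_\sigma^\uparrow(i)}(x_{\sigma(i)}-x_{\sigma(i-1)})\qquad (\bfx\in I^n_\sigma),
\end{equation*}
with each $\delta_{f_0}^A|_{I_+}$ additive on $I_+$ and, when $I=\R$, also $\delta_{f_0}$ additive and odd on $\R$ by Lemma~\ref{lemma:sdaf786}. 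Condition~$(ii)$ then lets Theorem~\ref{thm:sa876as} upgrade each such Cauchy solution to a genuine linear function, say $\delta_{f_0}^A(x)=c_A\,x$ with $c_A=\delta_{f_0}^A(1)$, valid on $I_+$ (and, when $I=\R$, on all of $\R$ for $\delta_{f_0}$). Substituting these linear forms back into the displayed representation yields exactly the right-hand side of (\ref{eq:F}) for the pseudo-Boolean function $\phi$ defined by $\phi(\mathbf{0})=0$ and $\phi(\mathbf{1}_A)=c_A$ for $A\neq\varnothing$, so $f_0=f_\phi|_{I^n}$ and $f=f_0+f(\mathbf{0})$ is the restriction to $I^n$ of a Lov\'asz extension.

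For the final assertion, positive homogeneity of degree one of $\delta_{f_0}^A$ forces $\delta_{f_0}^A(x)=x\,\delta_{f_0}^A(1)$ for $x\in I_+$, so $\delta_{f_0}^A|_{I_+}$ is linear, hence monotonic, and $(ii)$ holds for free; in the case $I=\R$, combining this with the oddness of $\delta_{f_0}$ supplied by Lemma~\ref{lemma:sdaf786} extends linearity of $\delta_{f_0}$ to the whole line. The delicate point I anticipate is precisely this case $I=\R$ in the sufficiency argument, where the displayed representation evaluates $\delta_{f_0}$ at possibly negative arguments $x_{\sigma(1)}$; it is exactly the global additivity of $\delta_{f_0}$ supplied by Lemma~\ref{lemma:sdaf786} (available because $I$ is then centered at $0$) that allows $(ii)$ to be imposed on $\delta_{f_0}$ as a whole, and lets Theorem~\ref{thm:sa876as} deliver a linear $\delta_{f_0}$ on all of $\R$ rather than only on $\R_+$.
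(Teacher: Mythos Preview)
Your argument for the main equivalence and for the final assertion follows the paper's proof essentially verbatim: reduce $(i)$ to horizontal min-additivity via Theorem~\ref{thm:as876}, apply the representation of Theorem~\ref{thm:ads76fa}, linearize the diagonal sections with Theorem~\ref{thm:sa876as}, and recognize formula~(\ref{eq:F}). That part is correct.

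However, you have not addressed one clause of the statement at all: ``If $I=\R$, then the set $I_+$ can be replaced by $I_-$ in $(ii)$.'' This is not a free consequence of what you wrote. If the regularity hypothesis is only on $\delta_{f_0}^A|_{I_-}$, you cannot linearize $\delta_{f_0}^A$ on $I_+$, and your min-additive representation evaluates $\delta_{f_0}^{A_\sigma^{\uparrow}(i)}$ at the \emph{nonnegative} increments $x_{\sigma(i)}-x_{\sigma(i-1)}$, so your argument breaks down. The paper handles this by switching to the dual representation of Theorem~\ref{thm:ads76fab}, which evaluates $\delta_{f_0}^{A_\sigma^{\downarrow}(i)}$ at nonpositive arguments; after linearizing on $I_-$ one obtains
\[
f_0(\bfx)=x_{\sigma(n)}\,\delta_{f_0}(1)+\sum_{i=1}^{n-1}(x_{\sigma(i+1)}-x_{\sigma(i)})\,\delta_{f_0}^{A_\sigma^{\downarrow}(i)}(-1),
\]
and then derives the identity $\delta_{f_0}^{A_\sigma^{\uparrow}(i+1)}(1)=\delta_{f_0}(1)+\delta_{f_0}^{A_\sigma^{\downarrow}(i)}(-1)$ (by evaluating both representations at $\mathbf{1}_{A_\sigma^{\uparrow}(i+1)}$) to rewrite this expression back in the form~(\ref{eq:F}). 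You need to supply this step, or an equivalent one, to complete the proof.
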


\begin{proof}
(Necessity) Follows from (\ref{eq:F}) and Theorems~\ref{thm:ads76fa} and \ref{thm:as876}.

(Sufficiency) By Theorems~\ref{thm:ads76fa} and \ref{thm:as876}, if $(i)$ holds, then $\delta_{f_0}$ and $\delta_{f_0}^A|_{I_+}$ are additive
for every $A\subseteq [n]$ and, for every $\sigma\in S_n$ and every $\bfx\in I^n_{\sigma}$, we have
$$
f_0(\bfx)=\delta_{f_0}(x_{\sigma(1)})+\sum_{i=2}^n \delta_{f_0}^{A_{\sigma}^{\uparrow}(i)}(x_{\sigma(i)}-x_{\sigma(i-1)}).
$$
By Theorem~\ref{thm:sa876as}, if $(ii)$ holds, then $\delta_{f_0}(x)=x\,\delta_{f_0}(1)$ for every $x\in I$ and
$\delta_{f_0}^A(x)=x\,\delta_{f_0}^A(1)$ for every $x\in I_+$. By (\ref{eq:F}) it follows that $f_0$ is a Lov\'asz extension such that
$f_0(\mathbf{0})=0$.

Now suppose that $I=\R$ and that $I_+$ is replaced by $I_-$ in $(ii)$. Then by Theorems~\ref{thm:ads76fab} and \ref{thm:as876}, $\delta_{f_0}$
and $\delta_{f_0}^A|_{I_-}$ are additive for every $A\subseteq [n]$ and, for every $\sigma\in S_n$ and every $\bfx\in I^n_{\sigma}$, we have
\begin{equation}\label{eq:dfsd786}
f_0(\bfx)=\delta_{f_0}(x_{\sigma(n)})+\sum_{i=1}^{n-1} \delta_{f_0}^{A_{\sigma}^{\downarrow}(i)}(x_{\sigma(i)}-x_{\sigma(i+1)}).
\end{equation}
By Theorem~\ref{thm:sa876as}, if $(ii)$ holds, then $\delta_{f_0}(x)=x\,\delta_{f_0}(1)$ for every $x\in I$ and
$\delta_{f_0}^A(x)=-x\,\delta_{f_0}^A(-1)$ for every $x\in I_-$. Thus (\ref{eq:dfsd786}) becomes
$$
f_0(\bfx)=x_{\sigma(n)}\,\delta_{f_0}(1)+\sum_{i=1}^{n-1} (x_{\sigma(i+1)}-x_{\sigma(i)})\,\delta_{f_0}^{A_{\sigma}^{\downarrow}(i)}(-1)
$$
from which we derive $\delta_{f_0}^{A_{\sigma}^{\uparrow}(i+1)}(1)=\delta_{f_0}(1)+\delta_{f_0}^{A_{\sigma}^{\downarrow}(i)}(-1)$ and hence
$$
f_0(\bfx) = x_{\sigma(n)}\,\delta_{f_0}(1)+\sum_{i=2}^{n}
(x_{\sigma(i)}-x_{\sigma(i-1)})\,\big(\delta_{f_0}^{A_{\sigma}^{\uparrow}(i)}(1)-\delta_{f_0}(1)\big).
$$
Again, we retrieve (\ref{eq:F}), thus showing that $f_0$ is a Lov\'asz extension such that $f_0(\mathbf{0})=0$.

Finally, if $(i)$ holds and $\delta_{f_0}^A$ is positively homogeneous of degree one for every $A\subseteq [n]$, then we have
$\delta_{f_0}(x)=x\,\delta_{f_0}(1)$ for every $x\in I_+$ and even for every $x\in I$ if $I=\R$ since then $\delta_{f_0}$ is odd by
Lemma~\ref{lemma:sdaf786}. Also, we have $\delta_{f_0}^A(x)=x\,\delta_{f_0}^A(1)$ for every $x\in I_+$ and, if $I=\R$,
$\delta_{f_0}^A(x)=-x\,\delta_{f_0}^A(-1)$ for every $x\in I_-$.
\end{proof}

\begin{remark}\label{rem:sdf76}
\begin{enumerate}
\item[(a)] Since any Lov\'asz extension vanishing at the origin is positively homogeneous of degree one, Condition $(ii)$ of
Theorem~\ref{thm:saddsf687} can be replaced by the stronger condition: $f_0$ is positively homogeneous of degree one.

\item[(b)] Axiomatizations of the class of $n$-place Choquet integrals can be immediately derived from Theorem~\ref{thm:saddsf687} by adding
nondecreasing monotonicity. Similar axiomatizations using comonotonic additivity (resp.\ horizontal min-additivity) were obtained by de Campos
and Bola\~{n}os \cite{deCBol92} (resp.\ by Benvenuti et al.~\cite[{\S}2.5]{BenMesViv02}).

\item[(c)] The concept of comonotonic additivity appeared first in Dellacherie~\cite{Del71} and then in Schmeidler~\cite{Sch86}. The concept of
horizontal min-additivity was previously considered by \v{S}ipo\v{s} \cite{Sip79} and then by Benvenuti et al.~\cite[{\S}2.3]{BenMesViv02} where
it was called ``horizontal additivity''.
\end{enumerate}
\end{remark}

\section{Axiomatizations of symmetric Lov\'asz extensions}

In this final section we introduce a simultaneous generalization of horizontal min-additivity and horizontal max-additivity, called horizontal
median-additivity, and we describe the corresponding function class. By adding further conditions, we then axiomatize the class of $n$-place
symmetric Lov\'asz extensions.

Horizontal median-additivity in a sense combines horizontal min-additivity and horizontal max-additivity by using two cut levels that are
symmetric with respect to the origin. Formally, assuming that $I$ is centered at $0$, we say that a function $f\colon I^n\to\R$ is
\emph{horizontally median-additive} if, for every $\bfx\in I^n$ and every $c\in I_+$, we have
\begin{equation}\label{eq:sd98f7}
f(\bfx)=f\big(\mathrm{med}(-c,\bfx,c)\big)+f\big(\llbracket\bfx\rrbracket_c\big)+f\big(\llbracket\bfx\rrbracket^{-c}\big),
\end{equation}
where $\mathrm{med}(-c,\bfx,c)$ is the $n$-tuple whose $i$th component is the middle value of $\{-c,x_i,c\}$.

Since any $\bfx\in I^n$ decomposes into the sum of the comonotonic $n$-tuples $\mathrm{med}(-c,\bfx,c)+\llbracket\bfx\rrbracket^{-c}=\bfx\wedge
c$ and $\llbracket\bfx\rrbracket_c$ for every $c\in I_+$ (i.e.,
$\bfx=\mathrm{med}(-c,\bfx,c)+\llbracket\bfx\rrbracket_c+\llbracket\bfx\rrbracket^{-c}$), any comonotonically additive function is necessarily
horizontally median-additive. However, we will see (see Proposition~\ref{prop:xycv786} below) that the converse claim is not true.

We also observe that if $f\colon I^n\to\R$ is horizontally median-additive, then necessarily $f(\mathbf{0})=0$ (take $\bfx=\mathbf{0}$ and $c=0$
in (\ref{eq:sd98f7})). We then see that
\begin{equation}\label{eq:s9fs87}
f(\bfx)=f(\bfx^+)+f(-\bfx^-)\qquad (\bfx\in I^n)
\end{equation}
(take $c=0$ in (\ref{eq:sd98f7})). This observation motivates the following definitions.

Assume that $I$ is centered at $0$. We say that a function $f\colon I^n\to\R$ is
\begin{itemize}
\item \emph{positively comonotonically additive} if (\ref{eq:7sd6f8}) holds for every comonotonic $n$-tuples $\bfx,\bfx'\in I_+^n$ such that
$\bfx+\bfx'\in I_+^n$.

\item \emph{negatively comonotonically additive} if (\ref{eq:7sd6f8}) holds for every comonotonic $n$-tuples $\bfx,\bfx'\in I_-^n$ such that
$\bfx+\bfx'\in I_-^n$.

\item \emph{positively horizontally min-additive} if (\ref{eq:s76f8}) holds for every $\bfx\in I_+^n$ and every $c\in I_+$.

\item \emph{negatively horizontally max-additive} if (\ref{eq:xc987x}) holds for every $\bfx\in I_-^n$ and every $c\in I_-$.
\end{itemize}

We observe immediately that if $f\colon I^n\to\R$ satisfies any of the four properties above, then $f(\mathbf{0})=0$.

\begin{lemma}\label{lemma:s9sd79s}
Assume that $I$ is centered at $0$. For any function $f\colon I^n\to\R$, the following assertions are equivalent.
\begin{enumerate}
\item[$(i)$] $f$ is horizontally median-additive.

\item[$(ii)$] $f$ is positively horizontally min-additive, negatively horizontally max-additive, and satisfies (\ref{eq:s9fs87}).

\item [$(iii)$] There exists a positively horizontally min-additive function $g\colon I^n\to\R$ and a negatively horizontally max-additive
function $h\colon I^n\to\R$ such that $f(\bfx)=g(\bfx^+)+h(-\bfx^-)$ for every $\bfx\in I^n$.
\end{enumerate}
\end{lemma}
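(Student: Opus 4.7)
The plan is to prove the cycle $(i)\Rightarrow(ii)\Rightarrow(iii)\Rightarrow(i)$.

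For $(i)\Rightarrow(ii)$, I would first note that horizontal median-additivity forces $f(\mathbf{0})=0$ (take $\bfx=\mathbf{0}$, $c=0$), and then that setting $c=0$ in (\ref{eq:sd98f7}) yields (\ref{eq:s9fs87}), since $\mathrm{med}(0,\bfx,0)=\mathbf{0}$, $\llbracket\bfx\rrbracket_0=\bfx^+$, and $\llbracket\bfx\rrbracket^0=-\bfx^-$. To get positive horizontal min-additivity, I would specialize (\ref{eq:sd98f7}) to $\bfx\in I_+^n$ and $c\in I_+$: then $\bfx\vee(-c)=\bfx$, so $\llbracket\bfx\rrbracket^{-c}=\mathbf{0}$, and $\mathrm{med}(-c,\bfx,c)=\bfx\wedge c$, giving exactly (\ref{eq:s76f8}) after using $f(\mathbf{0})=0$. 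Dually, applying (\ref{eq:sd98f7}) with $\bfx\in I_-^n$ and the cut level $c:=-c'$ for some $c'\in I_-$ recovers (\ref{eq:xc987x}) on $I_-^n$, giving negative horizontal max-additivity.

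The implication $(ii)\Rightarrow(iii)$ is immediate by taking $g=h=f$; indeed $f(\bfx^+)$ only uses values of $f$ on $I_+^n$ and $f(-\bfx^-)$ only uses values on $I_-^n$, so the restrictions of $f$ provide suitable $g$ and $h$.

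The main work lies in $(iii)\Rightarrow(i)$. Fix $\bfx\in I^n$ and $c\in I_+$, and compute both sides of (\ref{eq:sd98f7}) using $f=g(\cdot^+)+h(-(\cdot)^-)$. The key componentwise identities (which I would verify by splitting into the cases $x_i\geqslant 0$ and $x_i<0$) are
\begin{align*}
\bigl(\mathrm{med}(-c,\bfx,c)\bigr)^+ &= \bfx^+\wedge c, & -\bigl(\mathrm{med}(-c,\bfx,c)\bigr)^- &= (-\bfx^-)\vee(-c),\\
\llbracket\bfx\rrbracket_c &= \llbracket\bfx^+\rrbracket_c\in I_+^n, & \llbracket\bfx\rrbracket^{-c} &= \llbracket-\bfx^-\rrbracket^{-c}\in I_-^n.
\end{align*}
Using $g(\mathbf{0})=h(\mathbf{0})=0$ and these identities, the right-hand side of (\ref{eq:sd98f7}) rewrites as
$$
\bigl[g(\bfx^+\wedge c)+g(\llbracket\bfx^+\rrbracket_c)\bigr]+\bigl[h((-\bfx^-)\vee(-c))+h(\llbracket -\bfx^-\rrbracket^{-c})\bigr].
$$
Applying positive horizontal min-additivity of $g$ to $\bfx^+\in I_+^n$ with cut $c\in I_+$ collapses the first bracket to $g(\bfx^+)$, and applying negative horizontal max-additivity of $h$ to $-\bfx^-\in I_-^n$ with cut $-c\in I_-$ collapses the second to $h(-\bfx^-)$. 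The sum is therefore $f(\bfx)$, proving (\ref{eq:sd98f7}).

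The main obstacle is the bookkeeping in the last step: keeping track of how $\mathrm{med}(-c,\bfx,c)$, $\llbracket\bfx\rrbracket_c$, and $\llbracket\bfx\rrbracket^{-c}$ interact with the positive and negative parts, and making sure the relevant arguments actually lie in $I_+^n$ or $I_-^n$ so that the one-sided additivity hypotheses on $g$ and $h$ apply. Once these componentwise identities are established, the rest is a direct substitution.
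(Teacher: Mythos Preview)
Your proposal is correct and follows essentially the same route as the paper: the same cycle $(i)\Rightarrow(ii)\Rightarrow(iii)\Rightarrow(i)$, the same specializations $c=0$ and $\bfx\in I_\pm^n$ for the first implication, the choice $g=h=f$ for the second, and the same componentwise identities $(\mathrm{med}(-c,\bfx,c))^+=\bfx^+\wedge c$, $\llbracket\bfx\rrbracket_c=\llbracket\bfx^+\rrbracket_c$, etc.\ for the third. The only cosmetic difference is that the paper first observes $f|_{I_+^n}=g|_{I_+^n}$ and $f|_{I_-^n}=h|_{I_-^n}$ (so that it can phrase everything in terms of $f$), whereas you compute directly with $g$ and $h$; the substance is identical.
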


\begin{proof}
$(i)\Rightarrow (ii)$ If $f$ is horizontally median-additive, then $f$ satisfies (\ref{eq:s9fs87}) and $f(\mathbf{0})=0$. Also, $f$ is
positively horizontally min-additive. Indeed, for every $\bfx\in I_+^n$ and every $c\in I_+$, we have $f(\bfx)=f(\bfx\wedge
c)+f(\llbracket\bfx\rrbracket_c)+f(\mathbf{0})$. Dually, we can also show that $f$ is negatively horizontally max-additive.

$(ii)\Rightarrow (iii)$ It suffices to take $g=h=f$.

$(iii)\Rightarrow (i)$ Assume that $(iii)$ holds. Then, for every $\bfx\in I^n$, we have $f(\bfx^+)=g(\bfx^+)$ and $f(-\bfx^-)=h(-\bfx^-)$ and
hence $f$ satisfies (\ref{eq:s9fs87}). Let $\bfx\in I^n$ and $c\in I_+$. Applying (\ref{eq:s9fs87}) to $\mathrm{med}(-c,\bfx,c)$, we obtain
\begin{equation}\label{eq:as5767as}
f\big(\mathrm{med}(-c,\bfx,c)\big)=f(\bfx^+\wedge c)+f\big((-\bfx^-)\vee(-c)\big).
\end{equation}
By (\ref{eq:s9fs87}) and positive horizontal min-additivity and negative horizontal max-additivity, we finally have
\begin{eqnarray*}
f(\bfx) &=& f(\bfx^+)+f(-\bfx^-)\\
&=& f(\bfx^+\wedge c)+f(\llbracket\bfx^+\rrbracket_c)+f\big((-\bfx^-)\vee(-c)\big)+f(\llbracket-\bfx^-\rrbracket^{-c})\\
&=& f\big(\mathrm{med}(-c,\bfx,c)\big)+f(\llbracket\bfx\rrbracket_c)+f(\llbracket\bfx\rrbracket^{-c})\qquad\mbox{(by (\ref{eq:as5767as}))},
\end{eqnarray*}
which shows that $f$ is horizontally median-additive.
\end{proof}

By Lemma~\ref{lemma:s9sd79s}, to describe the class of horizontally median-additive functions, it suffices to describe the class of positively
horizontally min-additive functions and that of negatively horizontally max-additive functions. These descriptions are given in the following
two theorems. The proofs are similar to those of Theorems~\ref{thm:ads76fa}, \ref{thm:ads76fab}, and \ref{thm:as876} and hence are omitted.

\begin{theorem}\label{thm:ads76fac}
Assume that $I$ is centered at $0$. For any function $f\colon I^n\to\R$, the following assertions are equivalent.
\begin{enumerate}
\item[$(i)$] $f$ is positively horizontally min-additive.

\item[$(ii)$] $f$ is positively comonotonically additive.

\item[$(iii)$] There exists $g\colon I^n\to\R$, with $\delta_g^A|_{I_+}$ additive for every $A\subseteq [n]$, such that, for every $\sigma\in
S_n$,
$$
f(\bfx)=\delta_g(x_{\sigma(1)})+\sum_{i=2}^n \delta_g^{A_{\sigma}^{\uparrow}(i)}(x_{\sigma(i)}-x_{\sigma(i-1)})\qquad (\bfx\in I^n_{\sigma}\cap
I^n_+).
$$
In this case, we can choose $g=f$.
\end{enumerate}
\end{theorem}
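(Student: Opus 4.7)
The plan is to deduce Theorem~\ref{thm:ads76fac} by applying Theorems~\ref{thm:ads76fa} and \ref{thm:as876} to the restriction $f|_{I_+^n}\colon I_+^n\to\R$. The key observation is that each of (i)--(iii) only constrains the behavior of $f$ on the positive orthant $I_+^n$: positive horizontal min-additivity requires (\ref{eq:s76f8}) only for $\bfx\in I_+^n$ and $c\in I_+$; positive comonotonic additivity requires (\ref{eq:7sd6f8}) only for comonotonic pairs in $I_+^n$; and (iii) prescribes $f$ only on $I_\sigma^n\cap I_+^n$. Since $I_+=I\cap\R_+$ satisfies the hypothesis $0\in I_+\subseteq\R_+$ of Theorems~\ref{thm:ads76fa} and \ref{thm:as876}, these results apply verbatim to $f|_{I_+^n}$ regarded as a function $I_+^n\to\R$.

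With this reduction in place, I would verify three matching conditions. First, positive horizontal min-additivity of $f$ on $I^n$ is, by definition, exactly horizontal min-additivity of $f|_{I_+^n}$ on $I_+^n$, and likewise for positive comonotonic additivity. Second, the diagonal sections of the restriction coincide with $\delta_f^A|_{I_+}$, so the ``additive for every $A\subseteq[n]$'' clause in (iii) matches the one appearing in Theorem~\ref{thm:ads76fa}. Third, the telescoping argument in the necessity part of Theorem~\ref{thm:ads76fa} uses the successive cut levels $x_{\sigma(1)},\,x_{\sigma(2)}-x_{\sigma(1)},\ldots,x_{\sigma(n-1)}-x_{\sigma(n-2)}$, and for $\bfx\in I_\sigma^n\cap I_+^n$ these are all nonnegative (and lie in $I_+$ by comonotonicity with $x_{\sigma(n)}$), so every invocation of horizontal min-additivity remains within the positive orthant. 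Hence Theorem~\ref{thm:ads76fa} yields (i)$\Leftrightarrow$(iii) with $g=f$, and Theorem~\ref{thm:as876} yields (i)$\Leftrightarrow$(ii).

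The only step requiring genuine attention is the bookkeeping that keeps every object inside $I_+^n$. For instance, in (ii)$\Rightarrow$(i) one must check that if $\bfx\in I_+^n\cap I_\sigma^n$ and $c\in I_+$, then both $\bfx\wedge c$ and $\llbracket\bfx\rrbracket_c=\bfx-\bfx\wedge c$ lie in $I_+^n\cap I_\sigma^n$ (the latter because $x_{\sigma(i)}-x_{\sigma(i)}\wedge c$ is nondecreasing in $i$), and sum to $\bfx\in I_+^n$; positive comonotonic additivity then yields (\ref{eq:s76f8}). Likewise, in (iii)$\Rightarrow$(ii) one expands each $\delta_g^{A_\sigma^\uparrow(i)}\bigl((x_{\sigma(i)}+x'_{\sigma(i)})-(x_{\sigma(i-1)}+x'_{\sigma(i-1)})\bigr)$ via additivity of $\delta_g^A|_{I_+}$, which is legitimate precisely because both telescoping differences are nonnegative. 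No single step is hard; the expected obstacle is simply ensuring that no implicit appeal is made to additivity of $\delta_f$ on $I_-$ or to horizontal min-additivity with cut levels or vectors drawn from $I_-$, neither of which is available under these one-sided hypotheses.
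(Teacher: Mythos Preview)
Your proposal is correct and essentially coincides with the paper's approach: the paper simply states that the proof is ``similar to those of Theorems~\ref{thm:ads76fa}, \ref{thm:ads76fab}, and~\ref{thm:as876} and hence [is] omitted,'' and your reduction---applying Theorems~\ref{thm:ads76fa} and~\ref{thm:as876} verbatim to $f|_{I_+^n}$ after observing that $0\in I_+\subseteq\R_+$---is exactly the natural way to make that remark precise. The bookkeeping you flag (that $\bfx\wedge c$, $\llbracket\bfx\rrbracket_c$, and all telescoping cut levels and differences remain in $I_+$) is the only thing to check, and you have handled it correctly.
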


\begin{theorem}\label{thm:ads76facd}
Assume that $I$ is centered at $0$. For any function $f\colon\R^n\to\R$, the following assertions are equivalent.
\begin{enumerate}
\item[$(i)$] $f$ is negatively horizontally max-additive.

\item[$(ii)$] $f$ is negatively comonotonically additive.

\item[$(iii)$] There exists $h\colon I^n\to\R$, with $\delta_h^A|_{I_-}$ additive for every $A\subseteq [n]$, such that, for every $\sigma\in
S_n$,
$$
f(\bfx)=\delta_h(x_{\sigma(n)})+\sum_{i=1}^{n-1} \delta_h^{A_{\sigma}^{\downarrow}(i)}(x_{\sigma(i)}-x_{\sigma(i+1)})\qquad (\bfx\in
I^n_{\sigma}\cap I^n_-).
$$
In this case, we can choose $h=f$.
\end{enumerate}
\end{theorem}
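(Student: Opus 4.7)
The plan is to mirror the proofs of Theorems~\ref{thm:ads76fa}, \ref{thm:ads76fab}, and \ref{thm:as876}, now restricted to the negative orthant $I^n_-$. I will establish the cycle $(i)\Rightarrow(iii)\Rightarrow(i)$ together with the equivalence $(i)\Leftrightarrow(ii)$.

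For $(i)\Rightarrow(iii)$ with $h=f$: fix $\sigma\in S_n$ and $\bfx\in I^n_\sigma\cap I^n_-$, and apply negative horizontal max-additivity iteratively with the cut levels
$$
c_1=x_{\sigma(n)},\ c_2=x_{\sigma(n-1)}-x_{\sigma(n)},\ \ldots,\ c_{n-1}=x_{\sigma(2)}-x_{\sigma(3)},
$$
all of which lie in $I_-$ since the $x_{\sigma(i)}$ are nondecreasing and $\leqslant 0$. At the $k$-th step the upper part equals $c_k\mathbf{1}_{A_\sigma^{\downarrow}(n-k+1)}$ and thus contributes $\delta_f^{A_\sigma^{\downarrow}(n-k+1)}(c_k)$, while the residual, with zeros at coordinates $\sigma(n-k+1),\ldots,\sigma(n)$ and values $x_{\sigma(i)}-x_{\sigma(n-k+1)}\leqslant 0$ elsewhere, remains in $I^n_-$, permitting the next iteration. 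After $n-1$ iterations the residual is $(x_{\sigma(1)}-x_{\sigma(2)})\mathbf{1}_{A_\sigma^{\downarrow}(1)}$, whose $f$-value is $\delta_f^{A_\sigma^{\downarrow}(1)}(x_{\sigma(1)}-x_{\sigma(2)})$, and summing all contributions yields the displayed formula. The additivity of $\delta_f^A|_{I_-}$ then comes from the negative analogue of Lemma~\ref{lemma:sdaf786}: for $x,x'\in I_-$ with $x+x'\in I_-$, apply $(i)$ to $\bfx=(x+x')\mathbf{1}_A$ with $c=x\in I_-$, obtaining $\bfx\vee c=x\mathbf{1}_A$ and $\llbracket\bfx\rrbracket^c=x'\mathbf{1}_A$, hence $\delta_f^A(x+x')=\delta_f^A(x)+\delta_f^A(x')$.

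For $(iii)\Rightarrow(i)$: given $\bfx\in I^n_-\cap I^n_\sigma$ and $c\in I_-$, either $c\geqslant x_{\sigma(n)}$, in which case $\bfx\vee c=c\mathbf{1}$ and $\llbracket\bfx\rrbracket^c=\bfx-c\mathbf{1}$ and the identity reduces to additivity of $\delta_h$ at $x_{\sigma(n)}=c+(x_{\sigma(n)}-c)$; or there exists $p\in[n]$ with $x_{\sigma(p-1)}<c\leqslant x_{\sigma(p)}$ (with the convention $x_{\sigma(0)}=-\infty$), in which case plugging the representation separately into $f(\bfx\vee c)$ and $f(\llbracket\bfx\rrbracket^c)$ produces a telescoping sum whose single overlapping pair $\delta_h^{A_\sigma^{\downarrow}(p-1)}(x_{\sigma(p-1)}-c)+\delta_h^{A_\sigma^{\downarrow}(p-1)}(c-x_{\sigma(p)})$ collapses, by additivity of $\delta_h^A|_{I_-}$ on the two nonpositive arguments, into $\delta_h^{A_\sigma^{\downarrow}(p-1)}(x_{\sigma(p-1)}-x_{\sigma(p)})$, recovering $f(\bfx)$, exactly as in the proof of Theorem~\ref{thm:ads76fa}.

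Finally, $(i)\Leftrightarrow(ii)$ follows easily: since $\bfx=(\bfx\vee c)+\llbracket\bfx\rrbracket^c$ is a decomposition into two comonotonic members of $I^n_-$ whenever $\bfx\in I^n_-$ and $c\in I_-$, negative comonotonic additivity implies $(i)$; and conversely, for comonotonic $\bfx,\bfx'\in I^n_-$ with $\bfx+\bfx'\in I^n_-$ and common $\sigma$, plugging the representation of $(iii)$ into $f(\bfx+\bfx')$ and splitting each $\delta_f^{A_\sigma^{\downarrow}(i)}((x_{\sigma(i)}+x'_{\sigma(i)})-(x_{\sigma(i+1)}+x'_{\sigma(i+1)}))$ via additivity of $\delta_f^A|_{I_-}$ on the nonpositive differences recovers $f(\bfx)+f(\bfx')$, as in Theorem~\ref{thm:as876}. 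The main obstacle is entirely bookkeeping: at every step of the iteration one must verify that the current cut level lies in $I_-$ and the residual in $I^n_-$, and that every argument passed to a map $\delta_f^A$ in these identities is nonpositive, so that the weakened additivity hypothesis on $I_-$ is the only thing used throughout.
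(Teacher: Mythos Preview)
Your proposal is correct and follows precisely the approach the paper intends: the paper itself omits the proof, stating only that it is ``similar to those of Theorems~\ref{thm:ads76fa}, \ref{thm:ads76fab}, and \ref{thm:as876},'' and your argument is exactly the dual of those proofs restricted to $I^n_-$, with the necessary verifications that all cut levels and residuals remain in $I_-$ and $I^n_-$ so that only the additivity of $\delta_f^A|_{I_-}$ is invoked.
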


The following theorem gives a description of the class of horizontally median-additive functions.

\begin{theorem}\label{thm:asds7sd798}
Assume that $I$ is centered at $0$. For any function $f\colon I^n\to\R$, the following assertions are equivalent.
\begin{enumerate}
\item[$(i)$] $f$ is horizontally median-additive.

\item[$(ii)$] $f$ is positively horizontally min-additive (or positively comonotonically additive), negatively horizontally max additive (or
negatively comonotonically additive), and satisfies (\ref{eq:s9fs87}).

\item[$(iii)$] There exist $g\colon I^n\to\R$ and $h\colon I^n\to\R$, with $\delta_g^A|_{I_+}$ and $\delta_h^A|_{I_-}$ additive for every
$A\subseteq [n]$, such that, for every $\sigma\in S_n$,
\begin{eqnarray}
f(\bfx) &=& \delta_g^{A_{\sigma}^{\uparrow}(p+1)}(x_{\sigma(p+1)})+\sum_{i=p+2}^n\delta_g^{A_{\sigma}^{\uparrow}(i)}(x_{\sigma(i)}-x_{\sigma(i-1)})\nonumber\\
&& \null +
\delta_h^{A_{\sigma}^{\downarrow}(p)}(x_{\sigma(p)})+\sum_{i=1}^{p-1}\delta_h^{A_{\sigma}^{\downarrow}(i)}(x_{\sigma(i)}-x_{\sigma(i+1)})\qquad
(\bfx\in I^n_{\sigma}),\label{eq:sadfsds}
\end{eqnarray}
where $p\in\{0,\ldots,n\}$ is such that $x_{\sigma(p)}<0\leqslant x_{\sigma(p+1)}$. In this case, we can choose $g=h=f$.
\end{enumerate}
\end{theorem}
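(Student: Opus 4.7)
My plan is to deduce Theorem~\ref{thm:asds7sd798} from the preceding results, taking $g=h=f$ throughout. The equivalence $(i) \Leftrightarrow (ii)$ is immediate from Lemma~\ref{lemma:s9sd79s} combined with the $(i) \Leftrightarrow (ii)$ clauses of Theorems~\ref{thm:ads76fac} and \ref{thm:ads76facd}, which allow one to swap each ``positively (resp.\ negatively) horizontally additive'' property for the corresponding ``comonotonically additive'' version. The substantive content is therefore $(ii) \Leftrightarrow (iii)$.

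For $(ii) \Rightarrow (iii)$, I would fix $\sigma \in S_n$ and $\bfx \in I^n_\sigma$ with $p$ as in the statement, and decompose $f(\bfx) = f(\bfx^+) + f(-\bfx^-)$ via (\ref{eq:s9fs87}). Because $\bfx^+ \in I^n_\sigma \cap I^n_+$ has $\sigma$-ordered components $0,\ldots,0,x_{\sigma(p+1)},\ldots,x_{\sigma(n)}$, applying Theorem~\ref{thm:ads76fac}(iii) with $g=f$ yields a sum in which the first $p$ summands vanish (since $\delta_f^A|_{I_+}$ is additive, hence $\delta_f^A(0)=0$), and the $(p+1)$st summand collapses to $\delta_f^{A_\sigma^\uparrow(p+1)}(x_{\sigma(p+1)})$; this produces precisely the first line of (\ref{eq:sadfsds}). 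The analogous application of Theorem~\ref{thm:ads76facd}(iii) to $-\bfx^- = \bfx \wedge \mathbf{0} \in I^n_\sigma \cap I^n_-$ (whose $\sigma$-ordered components are $x_{\sigma(1)},\ldots,x_{\sigma(p)},0,\ldots,0$) with $h=f$ yields the second line.

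For $(iii) \Rightarrow (i)$, I would verify the three conditions of Lemma~\ref{lemma:s9sd79s}(ii) and invoke the lemma. Restricting (\ref{eq:sadfsds}) to $\bfx \in I^n_+$ forces $p=0$, so (\ref{eq:sadfsds}) reduces to the formula of Theorem~\ref{thm:ads76fac}(iii); the $(iii) \Rightarrow (i)$ part of that theorem then delivers positive horizontal min-additivity. The dual restriction $p=n$ supplies negative horizontal max-additivity via Theorem~\ref{thm:ads76facd}. To establish (\ref{eq:s9fs87}) for arbitrary $\bfx \in I^n_\sigma$, I would substitute $\bfx^+$ (with its own index equal to $0$) and $-\bfx^-$ (with its own index equal to $p$) into (\ref{eq:sadfsds}); using $\delta_g^A|_{I_+}(0) = \delta_h^A|_{I_-}(0) = 0$, the ``crossover'' terms drop out and $f(\bfx^+) + f(-\bfx^-)$ reassembles into the RHS of (\ref{eq:sadfsds}) evaluated at $\bfx$ itself.

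The only real obstacle is the bookkeeping at the boundary cases $p \in \{0,n\}$, where one line of (\ref{eq:sadfsds}) has an outer term involving the formally undefined $x_{\sigma(0)}$ or $x_{\sigma(n+1)}$; this is harmlessly resolved by the conventions $A_\sigma^\downarrow(0) = A_\sigma^\uparrow(n+1) = \varnothing$ together with the fact that $\delta_g^\varnothing \equiv \delta_h^\varnothing \equiv 0$, so the offending term vanishes. No new functional-equation content is needed beyond what already appears in Theorems~\ref{thm:ads76fac} and \ref{thm:ads76facd}.
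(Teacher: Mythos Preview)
Your proposal is correct and follows essentially the same route as the paper's proof: the paper dispatches $(i)\Leftrightarrow(ii)\Rightarrow(iii)$ in one line by citing Lemma~\ref{lemma:s9sd79s} and Theorems~\ref{thm:ads76fac}--\ref{thm:ads76facd}, and for $(iii)\Rightarrow(ii)$ it simply observes that formula~(\ref{eq:sadfsds}) makes (\ref{eq:s9fs87}) visible and then appeals to Theorems~\ref{thm:ads76fac} and \ref{thm:ads76facd} again. Your version fills in the bookkeeping (the vanishing of the ``crossover'' terms via $\delta_g^A(0)=\delta_h^A(0)=0$, and the boundary cases $p\in\{0,n\}$) that the paper leaves implicit, but the underlying argument is identical.
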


\begin{proof}
$(i)\Leftrightarrow (ii)\Rightarrow (iii)$ Follows from Lemma~\ref{lemma:s9sd79s} and Theorems~\ref{thm:ads76fac} and \ref{thm:ads76facd}.

$(iii)\Rightarrow (ii)$ Considering (\ref{eq:sadfsds}) with $g=h=f$, we see immediately that $f$ satisfies (\ref{eq:s9fs87}). We then conclude
by Theorems~\ref{thm:ads76fac} and \ref{thm:ads76facd}.
\end{proof}

We now axiomatize the class of $n$-place symmetric Lov\'asz extensions. A function $f\colon I^n\to\R$ is a symmetric Lov\'asz extension if it is
the restriction to $I^n$ of a symmetric Lov\'asz extension on $\R^n$.

\begin{theorem}\label{thm:saddsf687a}
Assume that $I$ is centered at $0$ with $[-1,1]\subseteq I$. Let $f\colon I^n\to\R$ be a function and let $f_0=f-f(\mathbf{0})$. Then $f$
is a symmetric Lov\'asz extension if and only if the following conditions hold:
\begin{enumerate}
\item[$(i)$] $f_0$ is horizontally median-additive.

\item[$(ii)$] Each of the maps $\delta_{f_0}^A|_{I_+}$ and $\delta_{f_0}^A|_{I_-}$ $(A\subseteq [n])$ is continuous at a point or monotonic or
Lebesgue measurable or bounded from one side on a set of positive measure.

\item[$(iii)$] $\delta_{f_0}^A(-1)=-\delta_{f_0}^A(1)$ for every $A\subseteq [n]$.
\end{enumerate}
Conditions $(ii)$ and $(iii)$ hold together if and only if $\delta_{f_0}^A$ is homogeneous of degree one for every $A\subseteq [n]$.
\end{theorem}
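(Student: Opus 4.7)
My plan is to treat necessity and sufficiency separately, using the representation formulas (\ref{eq:G}) and (\ref{eq:sadfsds}) as the structural pivot, together with Lemma~\ref{lemma:s9sd79s} and Theorems~\ref{thm:sa876as}, \ref{thm:saddsf687}, and \ref{thm:asds7sd798}.

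For the necessity direction, I would start by writing $f_0=\check{f}_{\psi}$ for some pseudo-Boolean function $\psi$ with $f_{\psi}(\mathbf{0})=0$; this is legitimate because $\check{f}_{\psi}(\mathbf{0})=f_{\psi}(\mathbf{0})$, so $f_0$ vanishes at the origin. Substituting $\bfx=x\mathbf{1}_A$ into (\ref{eq:G}) collapses almost all terms and yields $\delta_{f_0}^A(x)=x\,\delta_{f_0}^A(1)$ for every $x\in I$ and every $A\subseteq[n]$; from this linear form I read off both (ii) (each $\delta_{f_0}^A$ is continuous) and (iii) ($\delta_{f_0}^A(-1)=-\delta_{f_0}^A(1)$). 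For (i), I would use Lemma~\ref{lemma:s9sd79s}(iii): the identity $f_0(\bfx)=f_{\psi}(\bfx^+)-f_{\psi}(\bfx^-)$ makes (\ref{eq:s9fs87}) immediate, and on $I_+^n$ we have $f_0=f_{\psi}$, which is horizontally min-additive by Theorem~\ref{thm:saddsf687}, so $f_0$ is positively horizontally min-additive; a dual computation on $-\bfx^-$ gives negative horizontal max-additivity, and Lemma~\ref{lemma:s9sd79s} concludes.

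For the sufficiency direction, assuming (i)--(iii), Theorem~\ref{thm:asds7sd798} gives the representation (\ref{eq:sadfsds}) with $g=h=f_0$ and additivity of $\delta_{f_0}^A|_{I_+}$ and $\delta_{f_0}^A|_{I_-}$ for every $A$. Condition (ii) plus Theorem~\ref{thm:sa876as} upgrades these additive maps to linear ones, yielding $\delta_{f_0}^A(x)=x\,\delta_{f_0}^A(1)$ on $I_+$ and $\delta_{f_0}^A(x)=-x\,\delta_{f_0}^A(-1)$ on $I_-$; condition (iii) then glues the two branches into a single linear map on all of $I$. Substituting back into (\ref{eq:sadfsds}) reproduces exactly formula (\ref{eq:G}), so $f_0$ is a symmetric Lov\'asz extension that vanishes at the origin. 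Since adding the constant $f(\mathbf{0})$ amounts to translating the underlying pseudo-Boolean function by that same constant (one checks directly from (\ref{eq:sadfdfsa76}) that $\check{f}_{\psi+c}=\check{f}_{\psi}+c$), $f$ itself is a symmetric Lov\'asz extension.

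For the final equivalence, homogeneity of degree one of each $\delta_{f_0}^A$ is just linearity on $I$, which trivially implies the regularity in (ii) and the oddness in (iii). Conversely, once (i) is assumed (as in the theorem), the sufficiency analysis above shows that (ii) and (iii) jointly force each $\delta_{f_0}^A$ to be linear on $I$, hence homogeneous of degree one. I anticipate the main obstacle to be the necessity of horizontal median-additivity; routing through Lemma~\ref{lemma:s9sd79s}(iii) together with the known horizontal min-additivity of $f_{\psi}$ should be the shortest path, avoiding a cumbersome sign-by-sign case analysis of (\ref{eq:sd98f7}).
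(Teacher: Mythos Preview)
Your proposal is correct and, for sufficiency and for the final equivalence, matches the paper's argument essentially line for line: invoke Theorem~\ref{thm:asds7sd798} to get the representation~(\ref{eq:sadfsds}) with $g=h=f_0$, apply Theorem~\ref{thm:sa876as} under~(ii) to make $\delta_{f_0}^A|_{I_+}$ and $\delta_{f_0}^A|_{I_-}$ linear, use~(iii) to glue them, and recognize the result as~(\ref{eq:G}).

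The only place where you deviate is the necessity of~(i). The paper does this in one stroke: since~(\ref{eq:G}) is already an instance of the template~(\ref{eq:sadfsds}) (with $\delta_g^A(x)=\delta_h^A(x)=x\,\delta_{f_0}^A(1)$, certainly additive on $I_+$ and $I_-$), Theorem~\ref{thm:asds7sd798}\,$(iii)\Rightarrow(i)$ gives horizontal median-additivity immediately. You instead route through Lemma~\ref{lemma:s9sd79s}(iii), exhibiting $g=f_{\psi}$ and $h(\bfx)=-f_{\psi}(-\bfx)$ and appealing to Theorem~\ref{thm:saddsf687} for the horizontal min-additivity of $f_{\psi}$, plus a duality computation for $h$. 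Both arguments are valid; yours is a bit longer but has the advantage of making explicit how the symmetric extension decomposes into an ordinary Lov\'asz piece on $I_+^n$ and its reflection on $I_-^n$, whereas the paper's shortcut simply pattern-matches~(\ref{eq:G}) against~(\ref{eq:sadfsds}). Your remark that the implication $(ii)\wedge(iii)\Rightarrow$ homogeneity tacitly uses~(i) (for the additivity needed to apply Theorem~\ref{thm:sa876as}) is a fair observation; the paper leaves this implicit.
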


\begin{proof}
(Necessity) Follows from (\ref{eq:G}) and Theorem~\ref{thm:asds7sd798}.

(Sufficiency) By Theorem~\ref{thm:asds7sd798}, if $(i)$ holds, then $\delta_{f_0}^A|_{I_+}$ and $\delta_{f_0}^A|_{I_-}$ are additive for every
$A\subseteq [n]$ and, for every $\sigma\in S_n$ and every $\bfx\in I^n_{\sigma}$, we have
\begin{eqnarray*}
f_0(\bfx) &=& \delta_{f_0}^{A_{\sigma}^{\uparrow}(p+1)}(x_{\sigma(p+1)})+\sum_{i=p+2}^n\delta_{f_0}^{A_{\sigma}^{\uparrow}(i)}(x_{\sigma(i)}-x_{\sigma(i-1)})\\
&& \null +
\delta_{f_0}^{A_{\sigma}^{\downarrow}(p)}(x_{\sigma(p)})+\sum_{i=1}^{p-1}\delta_{f_0}^{A_{\sigma}^{\downarrow}(i)}(x_{\sigma(i)}-x_{\sigma(i+1)}),
\end{eqnarray*}
where $p\in\{0,\ldots,n\}$ is such that $x_{\sigma(p)}<0\leqslant x_{\sigma(p+1)}$.

By Theorem~\ref{thm:sa876as}, if $(ii)$ holds, then $\delta_{f_0}^A(x)=x\,\delta_{f_0}^A(1)$ for every $x\in I_+$ and
$\delta_{f_0}^A(x)=-x\,\delta_{f_0}^A(-1)$ for every $x\in I_-$. Thus we have
\begin{eqnarray*}
f_0(\bfx) &=& x_{\sigma(p+1)}\,\delta_{f_0}^{A_{\sigma}^{\uparrow}(p+1)}(1)+\sum_{i=p+2}^n(x_{\sigma(i)}-x_{\sigma(i-1)})\,\delta_{f_0}^{A_{\sigma}^{\uparrow}(i)}(1)\\
&& \null
-x_{\sigma(p)}\,\delta_{f_0}^{A_{\sigma}^{\downarrow}(p)}(-1)-\sum_{i=1}^{p-1}(x_{\sigma(i)}-x_{\sigma(i+1)})\,\delta_{f_0}^{A_{\sigma}^{\downarrow}(i)}(-1).
\end{eqnarray*}
Using $(iii)$ and (\ref{eq:G}), it follows that $f_0$ is a symmetric Lov\'asz extension such that $f_0(\mathbf{0})=0$.

Finally, $(ii)$ and $(iii)$ imply that $\delta_{f_0}^A(x)=x\,\delta_{f_0}^A(1)$ and
$\delta_{f_0}^A(-x)=x\,\delta_{f_0}^A(-1)=-x\,\delta_{f_0}^A(1)$ for every $x\in I_+$, which means that $\delta_{f_0}^A$ is homogeneous of
degree one. Conversely, if $\delta_{f_0}^A$ is homogeneous of degree one for every $A\subseteq [n]$, then $(ii)$ and $(iii)$ hold trivially.
\end{proof}

\begin{remark}\label{rem:yx56y}
\begin{enumerate}
\item[(a)] Since any symmetric Lov\'asz extension vanishing at the origin is homogeneous of degree one, Conditions $(ii)$ and $(iii)$ of
Theorem~\ref{thm:saddsf687a} can be replaced by the stronger condition: $f_0$ is homogeneous of degree one.

\item[(b)] Axiomatizations of the class of $n$-place symmetric Choquet integrals can be immediately derived by adding nondecreasing
monotonicity.
\end{enumerate}
\end{remark}

The following proposition gives a condition for a symmetric Lov\'asz extension to be a Lov\'asz extension. This clearly shows that horizontal
median-additivity does not imply comonotonic additivity.

\begin{proposition}\label{prop:xycv786}
Let $f\colon\R^n\to\R$ be a Lov\'asz extension and let $g\colon\R^n\to\R$ be a symmetric Lov\'asz extension such that
$f|_{\{0,1\}^n}=g|_{\{0,1\}^n}$. Let also $f_0=f-f(\mathbf{0})$. Then we have $f=g$ if and only if $f_0(-\bfx)=-f_0(\bfx)$ for every
$\bfx\in\R_+^n$ (or equivalently, for every $\bfx\in\R_-^n$).
\end{proposition}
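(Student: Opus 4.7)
The plan is to reduce the equality $f=g$ to the stated pointwise condition on $\R_+^n$ by combining the closed forms (\ref{eq:F}) and (\ref{eq:sadfdfsa76}).

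First, writing $\phi:=f|_{\{0,1\}^n}=g|_{\{0,1\}^n}$, we have $f=f_{\phi}$ and $g=\check{f}_{\phi}$; in particular $f(\mathbf{0})=g(\mathbf{0})$, so the function $g_0:=g-g(\mathbf{0})$ satisfies
\begin{equation*}
g_0(\bfx)=f_0(\bfx^+)-f_0(\bfx^-)\qquad(\bfx\in\R^n)
\end{equation*}
by the very definition (\ref{eq:sadfdfsa76}) of the symmetric Lov\'asz extension.

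The central step is to establish the identity
\begin{equation*}
f_0(\bfx)=f_0(\bfx^+)+f_0(-\bfx^-)\qquad(\bfx\in\R^n).
\end{equation*}
To prove it I would fix $\sigma\in S_n$ with $\bfx\in\R^n_{\sigma}$, and let $p\in\{0,\ldots,n\}$ satisfy $x_{\sigma(p)}<0\leqslant x_{\sigma(p+1)}$. Both $\bfx^+$ and $-\bfx^-$ then lie in $\R^n_{\sigma}$ (padding with zeros at the appropriate end preserves sortedness), so (\ref{eq:F}) applies to each of $\bfx$, $\bfx^+$, and $-\bfx^-$. An Abel-type rearrangement rewrites (\ref{eq:F}) in the form $f_0(\bfy)=\sum_{i=1}^n y_{\sigma(i)}\,b_i$, where the coefficients $b_i$ depend only on $\sigma$ (not on $\bfy$). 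Splitting the sum for $\bfx$ at the index $p$ then gives $\sum_{i\leqslant p}x_{\sigma(i)}b_i=f_0(-\bfx^-)$ and $\sum_{i>p}x_{\sigma(i)}b_i=f_0(\bfx^+)$, which is the desired identity.

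Combining the two displayed equations, $f_0(\bfx)-g_0(\bfx)=f_0(\bfx^-)+f_0(-\bfx^-)$. Since $f(\mathbf{0})=g(\mathbf{0})$ we have $f=g$ if and only if $f_0=g_0$, and since $\bfx^-$ ranges over all of $\R_+^n$ as $\bfx$ ranges over $\R^n$, this holds if and only if $f_0(-\bfy)=-f_0(\bfy)$ for every $\bfy\in\R_+^n$. The equivalent formulation over $\R_-^n$ follows by the substitution $\bfy\mapsto-\bfy$ together with $f_0(\mathbf{0})=0$. The only delicate point is the bookkeeping that produces the central identity: one has to track carefully the index $p$ where the sign of $x_{\sigma(\cdot)}$ changes, but no new analytic input is required.
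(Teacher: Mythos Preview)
Your argument is correct and follows the same skeleton as the paper's: derive $g_0(\bfx)=f_0(\bfx^+)-f_0(\bfx^-)$ from (\ref{eq:sadfdfsa76}), establish the decomposition $f_0(\bfx)=f_0(\bfx^+)+f_0(-\bfx^-)$, subtract, and conclude. The only difference is in how you justify the central decomposition: the paper simply invokes the comonotonic additivity of $f_0$ (a Lov\'asz extension vanishing at the origin, hence comonotonically additive by Theorems~\ref{thm:ads76fa} and~\ref{thm:as876}), whereas you unpack this directly via the linearity of (\ref{eq:F}) on each cone $\R^n_\sigma$ together with the observation that $\bfx^+$ and $-\bfx^-$ remain in $\R^n_\sigma$. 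Your route is self-contained but amounts to reproving, in this special case, the comonotonic additivity already available; the paper's one-line appeal is shorter but relies on the earlier machinery.
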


\begin{proof}
Since $f_0$ is comonotonically additive, for every $\bfx\in\R^n$ we have
$$
f(\bfx)=f(\bfx^+)+f(-\bfx^-)-f(\mathbf{0}).
$$
Combining this with (\ref{eq:sadfdfsa76}), that is
$$
g(\bfx)=f(\bfx^+)-f(\bfx^-)+f(\mathbf{0}),
$$
we see that $f=g$ if and only if $f_0(-\bfx^-)=-f_0(\bfx^-)$, which completes the proof.
\end{proof}

\section*{Acknowledgments}

The authors would like to thank Professor Maksa for useful discussions and his precious comments. This work was supported by the internal research project F1R-MTH-PUL-09MRDO of the University of Luxembourg.

\appendix
\section{Proof of Theorem~\ref{thm:sa876as}}
\label{app}

We first observe that Theorem~\ref{thm:sa876as} holds when $I=\R$; see \cite[{\S}2]{AczDho89}. To see that the result still holds for any
nontrivial interval $I$ containing $0$, we consider the following two lemmas.

\begin{lemma}\label{lemma:app1}
Let $J$ be a nontrivial real interval containing $0$, and let $I=J+J=\{x+y : x,y\in J\}$. If a function $f\colon I\to\R$ satisfies
$f(x+x')=f(x)+f(x')$ for $x,x'\in J$, then $f$ can be uniquely extended onto $\R$ to an additive function.
\end{lemma}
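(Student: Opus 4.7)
My plan is to pass to the restriction $g := f|_J$ and show that $g$ extends uniquely to an additive function $G\colon\R\to\R$; then verify that $G$ agrees with $f$ on all of $I$. Observe first that $g$ automatically satisfies the restricted Cauchy equation $g(x+x') = g(x) + g(x')$ whenever $x, x', x+x' \in J$, since in that case $g(x+x') = f(x+x') = f(x)+f(x') = g(x)+g(x')$. In particular $g(0) = 0$ and, by induction, $g(kx) = k\, g(x)$ for every $x \in J$ and every positive integer $k$ with $kx \in J$.

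Next I construct $G$ on $\R$. Because $J$ is a nontrivial interval containing $0$, for every $t \in \R$ there is a sign $\varepsilon \in \{-1,+1\}$ and an integer $n \geqslant 1$ with $\varepsilon t / n \in J$: it suffices to pick $\varepsilon$ so that $\varepsilon t$ lies on a side of $0$ which intersects $J$ nontrivially, and then take $n$ large enough. Define $G(t) := \varepsilon\, n\, g(\varepsilon t / n)$. The first routine check is independence of $(\varepsilon, n)$, done by comparing two representations through a common multiple and applying $g(kx) = k\, g(x)$ inside $J$; the same trick yields $G(-t) = -G(t)$ and $G(k t) = k\, G(t)$ for $k \in \N$.

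The key step is verifying additivity $G(s+t) = G(s) + G(t)$. The easy case is when $s$, $t$, and $s+t$ all have the same sign: choose $n$ large enough so that (after a common sign flip if needed) $s/n$, $t/n$, $(s+t)/n$ all lie in $J$, and apply the Cauchy equation for $g$ directly. The mixed-sign case reduces to the same-sign case by rewriting $s = (s+t) + (-t)$ with $s+t$ and $-t$ now of the same sign, and invoking $G(-t) = -G(t)$. Once $G$ is additive, consistency with $f$ on $I$ is immediate: any $z \in I$ writes as $z = x + x'$ with $x, x' \in J$, whence $G(z) = G(x) + G(x') = g(x) + g(x') = f(x+x') = f(z)$.

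Uniqueness is straightforward: any additive extension of $f$ restricts to $g$ on $J$, and additivity then forces its values on all of $\R$ via the same scaling identities used to define $G$. I expect the main obstacle to be the bookkeeping in the one-sided case (for instance $J = [0, a]$), where $J$ contains no two-sided neighborhood of $0$: oddness of $G$ must be imposed \emph{by hand} on the negative axis, and the sign-case analysis for additivity is where all the care is needed. The two-sided case $J \supseteq [-\delta, \delta]$ is straightforward once the scaling identity $g(kx) = k\, g(x)$ is in place.
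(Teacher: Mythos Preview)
Your argument is correct. The only point that deserves a word of caution is the mixed-sign reduction: when $s>0>t$, the rewrite $s=(s+t)+(-t)$ puts $s+t$ and $-t$ on the same side only if $s+t\geqslant 0$; in the remaining case $s+t<0$ you should instead write $t=(s+t)+(-s)$, where now $s+t$ and $-s$ are both nonpositive. This is surely what you had in mind (you flag exactly this bookkeeping as the expected obstacle), but it should be said explicitly. The well-definedness check across different signs $\varepsilon\neq\varepsilon'$ also uses that if both $y$ and $-y$ lie in $J$ then $g(y)+g(-y)=g(0)=0$; this is available from the restricted Cauchy equation and should be mentioned.

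As for comparison with the paper: the paper does not give its own proof of this lemma at all, but simply refers to Kuczma's book \cite[Theorem~13.5.3]{Kuc08}. Your direct construction---define $G$ via $G(t)=\varepsilon\,n\,g(\varepsilon t/n)$, check consistency by passing to common multiples, then verify additivity by a same-sign/mixed-sign case split---is exactly the standard elementary argument for extending a locally additive function to a globally additive one, and is in the spirit of what one finds in Kuczma. So you are supplying a self-contained proof where the paper is content to cite.
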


\begin{proof}
See \cite[Theorem~13.5.3]{Kuc08}.
\end{proof}

\begin{lemma}\label{lemma:app2}
Let $J$ be a nontrivial real interval containing $0$, and let $I=J+J=\{x+y : x,y\in J\}$. If a function $f\colon I\to\R$ satisfies
$f(x+x')=f(x)+f(x')$ for $x,x'\in J$ and there is no $c\in\R$ such that $f(x)=cx$ for all $x\in J$, then the graph of $f$ is everywhere dense in
$I\times\R$.
\end{lemma}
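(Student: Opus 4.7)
The plan is to reduce to the already-known case $I=\R$ of Theorem~\ref{thm:sa876as} by first extending $f$ to an additive function on all of $\R$ via Lemma~\ref{lemma:app1}, and then transferring the resulting density statement from $\R\times\R$ back down to $I\times\R$.

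First, I would invoke Lemma~\ref{lemma:app1} to obtain a (unique) additive function $F\colon\R\to\R$ that agrees with $f$ on $I$. I claim $F$ cannot be of the form $F(x)=cx$ for any $c\in\R$: otherwise $f(x)=F(x)=cx$ would hold on all of $I$, in particular on $J\subseteq I$, contradicting the hypothesis. Applying the $I=\R$ case of Theorem~\ref{thm:sa876as}, the graph of $F$ must therefore be everywhere dense in $\R\times\R$.

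Next, I would upgrade this density of $\mathrm{graph}(F)$ in $\R^2$ to density of $\mathrm{graph}(f)=\mathrm{graph}(F|_I)$ in $I\times\R$. Given $(a,b)\in I\times\R$ and $\varepsilon>0$, I need $x\in I$ with $|x-a|<\varepsilon$ and $|f(x)-b|<\varepsilon$. Since $I$ is a nontrivial interval containing $a$, I can select a nonempty open subinterval $K\subseteq I$ of length at most $\varepsilon$ having $a$ in its closure: for $a$ interior to $I$ a two-sided $K$ works, and for $a$ a one-sided endpoint of $I$ a suitably short one-sided $K$ works. Then $K\times(b-\varepsilon,b+\varepsilon)$ is a nonempty open subset of $\R^2$, so by density of $\mathrm{graph}(F)$ it contains some point $(x,F(x))$. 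Since $x\in K\subseteq I$, we have $f(x)=F(x)$, and both desired inequalities hold.

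The only genuine subtlety is the endpoint case in the last step: if $a$ lies on the topological boundary of $I$, one cannot simply apply density to an open ball around $(a,b)$ sitting inside $I\times\R$, since no such ball exists. Choosing a one-sided open rectangle strictly inside $I\times\R$ (rather than a symmetric neighborhood of $(a,b)$) sidesteps this cleanly; the rest of the argument is routine.
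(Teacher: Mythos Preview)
Your proposal is correct and follows essentially the same approach as the paper: extend $f$ to an additive $F$ on $\R$ via Lemma~\ref{lemma:app1}, rule out linearity of $F$ using $J\subseteq I$, apply the $I=\R$ case to get density of $\mathrm{graph}(F)$ in $\R^2$, and then restrict this density to $I\times\R$. Your handling of the density transfer at boundary points of $I$ via one-sided open rectangles is in fact more careful than the paper's, which simply asserts that an approximating sequence ``may'' be taken inside $I$ because $I$ is an interval.
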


\begin{proof}
By Lemma~\ref{lemma:app1}, there is an additive function $g\colon\R\to\R$ such that $g(x)=f(x)$ for $x\in I$. If there existed $c\in\R$ such
that $g(x)=cx$ for all $x\in\R$ then $f(x)=cx$ would follow for all $x\in I$. Since $0\in J$, we would have $J\subseteq I$ and hence $f(x)=cx$
for all $x\in J$, a contradiction. Therefore, since Theorem~\ref{thm:sa876as} holds when $I=\R$, the graph $G_g$ of $g$ must be dense in $\R^2$.
Hence, for any $(x,y)\in I\times\R$ there is a sequence $(x_n,g(x_n))\in G_g$ such that $(x_n,g(x_n))\to (x,y)$ as $n\to\infty$. since $I$ is an
interval we may (and do) assume that $x_n\in I$ for all $n$. Thus $(x_n,f(x_n))=(x_n,g(x_n))\to (x,y)$ as $n\to\infty$, which proves that the
graph of $f$ is dense in $I\times\R$.
\end{proof}

\begin{proof}[Proof of Theorem~\ref{thm:sa876as}]
Let $J=\{x/2 : x\in I\}$. By definition, $f$ satisfies $f(x+x')=f(x)+f(x')$ for $x,x'\in J$. By Lemma~\ref{lemma:app2}, if there is no $c\in\R$
such that $f(x)=cx$ for all $x\in J$, then the graph of $f$ is everywhere dense in $I\times\R$. Since Theorem~\ref{thm:sa876as} holds when
$I=\R$, the second claim follows trivially.
\end{proof}

\end{document}